\newtheorem{theorem}{Theorem}
\newtheorem{lemma}[theorem]{Lemma}
\theoremstyle{definition}
\newcommand\abs[1]{\left|#1\right|}
\newcommand\p[1]{\left(#1\right)}
\newcommand\cp[1]{\left\{#1\right\}}
\author[A.\ F.\ Frankl\'{i}n]{Atli Fannar Frankl\'{i}n}
\title{Pattern avoiding permutations enumerated by inversions}
\affiliation{University of Iceland, Reykjavík, Iceland}
\keywords{combinatorics, pattern avoiding permutations, inversions, indecomposable permutations}
\begin{document}
\publicationdata{vol. 27:1, Permutation Patterns 2024}{2025}{5}{10.46298/dmtcs.14437}{2024-10-11; 2024-10-11; 2025-05-20}{2025-05-21}
\maketitle
\begin{abstract}
\vspace*{0.25cm}
  Permutations are usually enumerated by size, 
  but new results can be found by enumerating them by inversions instead, 
  in which case one must restrict one's attention to indecomposable permutations. 
  In the style of the seminal paper by Simion and Schmidt, we investigate all
  combinations of permutation patterns of length at most $3$.
\end{abstract}

\section{Introduction}

To enumerate a family of permutations by their number of inversions we need the family of permutations with a given number of inversions to be finite. This is generally not the case, so to get around this we use the notion of decomposability. We define the \emph{sum} of two permutations $\rho_1, \dots, \rho_n$ and $\tau_1, \dots, \tau_m$ as a permutation on $n + m$ elements given by 
\[\rho \oplus \tau = \rho_1 \dots \rho_n (n+\tau_1) \dots (n+\tau_m)\]
Following the definitions of Lentin~\cite{lentin} and Comtet~\cite{comtet} we will say that $\pi$ is \emph{decomposable} if $\pi = \rho \oplus \tau$ for some non-empty permutations $\rho$ and $\tau$. In the same vein we call permutations that are not decomposable \emph{indecomposable}. With this definition every permutation can be factored into a set of indecomposable factors, we call these factors its \emph{components}.

An \emph{inversion} in a permutation $\pi = \pi_1 \dots \pi_n$ is a pair of indices $(i, j)$ such that $i < j$ and $\pi_i > \pi_j$. The number of inversions in a permutation $\pi$ will be denoted $\operatorname{inv}(\pi)$. The following lemma is due to Claesson, Jelínek and Steingrímsson and highlights a relation between these two concepts.

\begin{lemma}[\cite{akc}, Lemma 8]\label{invcomp}
Let $\pi$ be a permutation on $n$ elements and $c$ be the number of components of $\pi$. Then $\operatorname{inv}(\pi) \geq n - c$.
\end{lemma}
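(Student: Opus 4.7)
The plan is to reduce the statement to the indecomposable case and then prove by induction on size that every indecomposable permutation of size $k$ has at least $k-1$ inversions; the lemma will follow immediately. For the reduction, write $\pi = \sigma_1 \oplus \cdots \oplus \sigma_c$ with $|\sigma_i| = n_i$. By the definition of $\oplus$, every entry of $\sigma_i$ is strictly smaller than every entry of $\sigma_j$ whenever $i < j$, so no inversion of $\pi$ can straddle two different components; hence $\operatorname{inv}(\pi) = \sum_i \operatorname{inv}(\sigma_i)$, and it suffices to prove the sub-claim $\operatorname{inv}(\sigma) \geq k-1$ for each indecomposable $\sigma$ of size $k$.

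For the sub-claim, I would induct on $k$; the case $k=1$ is trivial. For $k \geq 2$, let $p$ be the position of the value $k$ in $\sigma$. Indecomposability forces $p < k$, since otherwise $\{\sigma_1,\dots,\sigma_{k-1}\} = \{1,\dots,k-1\}$. Deleting this entry yields a permutation $\sigma'$ of size $k-1$ on the values $\{1,\dots,k-1\}$ (no rescaling needed), and the inversions of $\sigma$ involving position $p$ are precisely the $k-p$ pairs $(p,l)$ with $l > p$, so $\operatorname{inv}(\sigma) = \operatorname{inv}(\sigma') + (k-p)$. If $\sigma'$ is itself indecomposable, the inductive hypothesis gives $\operatorname{inv}(\sigma') \geq k-2$, and the bound $k-p \geq 1$ closes the gap.

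The main obstacle is the case where $\sigma'$ decomposes as $\tau^{(1)} \oplus \cdots \oplus \tau^{(m)}$ with $m \geq 2$ parts of sizes $n_1,\dots,n_m$; applying the inductive hypothesis to each component only yields $\operatorname{inv}(\sigma') \geq (k-1) - m$, so I need to show $k-p \geq m$ to absorb the loss. The key observation is that the first break of $\sigma'$ cannot precede position $p$: if $n_1 < p$ then $\sigma_1,\dots,\sigma_{n_1}$ coincides with $\sigma'_1,\dots,\sigma'_{n_1}$, whose value set is $\{1,\dots,n_1\}$, contradicting the indecomposability of $\sigma$. Hence $p \leq n_1$, and since $n_1 + n_2 + \cdots + n_m = k-1$ with each $n_i \geq 1$ we obtain $n_1 \leq k-m$, so $k - p \geq m$ and the induction goes through.
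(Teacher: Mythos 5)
Your proposal is correct, but note that the paper itself offers no proof of this lemma: it is imported verbatim from Claesson, Jel\'{i}nek and Steingr\'{i}msson (Lemma 8 of the cited work), so there is no internal argument to compare against. What you give is a complete, self-contained replacement. The reduction is right: in $\sigma_1 \oplus \cdots \oplus \sigma_c$ no inversion can straddle two components, so $\inv$ is additive and the lemma follows from the bound $\inv(\sigma) \geq k-1$ for indecomposable $\sigma$ of size $k$. Your induction (strong induction on $k$, which is what you implicitly use when $\sigma'$ splits into several components) also checks out: deleting the maximum value $k$ at position $p$ removes exactly $k-p$ inversions, and the decisive observation --- that the first component of $\sigma'$ cannot end before position $p$, since the first $n_1 < p$ entries of $\sigma$ would then be exactly $\{1,\dots,n_1\}$, contradicting indecomposability of $\sigma$ --- gives $p \leq n_1 \leq k-m$ and hence $k-p \geq m$, which exactly absorbs the loss of $m$ from applying the hypothesis to the $m$ components. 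Every step is sound; the only cosmetic remark is that you should state the induction as strong induction explicitly, since the components of $\sigma'$ can have any sizes below $k$, not just $k-1$.
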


Crucially, if $c = 1$ then $\operatorname{inv}(\pi) \geq n - 1$ and thus the set of indecomposable permutations with $k$ inversions is finite since it is contained in the set of all permutations on $k + 1$ or fewer elements. With this in mind we define $I_k$ to be the set of all indecomposable permutations with exactly $k$ inversions.

We recall some notions related to patterns in permutations, following the notation summarised by Bevan~\cite{basicnotation}. We will restrict ourselves to only consider classical permutation patterns. We say that two sequences $a_1, \dots, a_m$ and $b_1, \dots, b_m$ are \emph{order-isomorphic} if $a_i < a_j$ holds if and only if $b_i < b_j$. For permutations $\pi = \pi_1 \dots \pi_n$ and $\tau = \tau_1 \dots \tau_m$ we say that $\pi$ contains the pattern $\tau$ if there exists a set of indices $i_1, \dots, i_m$ such that $\pi_{i_1} \dots \pi_{i_m}$ is order-isomorphic to $\tau_1 \dots \tau_m$. We call such a set of indices an \emph{occurrence} of the pattern $\tau$. If $\pi$ has no occurrence of $\tau$ we will say that $\pi$ \emph{avoids} the pattern $\tau$. The subset of $I_k$ consisting of the permutations avoiding $\tau$ by will be denoted $I_k(\tau)$. Similarly, we will denote the subset of $I_k$ consisting of permutations avoiding several patterns $\tau_1, \tau_2, \dots, \tau_r$ by $I_k(\tau_1, \tau_2, \dots, \tau_r)$.

In this paper we will investigate all combinations of patterns of length $\leq 3$ in the style of Simion and Schmidt's~\cite{seminal} paper. Something similar in the case of indecomposable permutations enumerated by size (rather than inversions) has been done before by Gao, Kitaev and Zhang~\cite{indecomp}, where they determine the number of indecomposable permutations for all patterns of length $\leq 4$ in terms of the number of decomposable permutations.

Another paper in this area is due to Dokos, Dwyer, Johnson, Sagan and Selsor~\cite{dokos}. That paper finds $q$-polynomials for permutations enumerated jointly by size and inversions. They find many algebraic connections related to polynomials and $q$-analogues, covering analogues of the Catalan numbers, Fibonacci numbers, triangular numbers and powers of two. We deviate from this approach of joint enumeration for the two reasons outlined below. Our results do not follow directly from theirs either as they do not consider all the cases of this paper and in the cases where they do the indecomposability criterion prevents direct derivation.

The first reason is to get novel bijective maps to other combinatorial objects, rather than algebraic equivalences. The motivating example for this paper came from Claesson, Jelínek and Steingrímsson~\cite{akc}. In Lemma 10 of their paper they note that the inversion tables of permutations are weakly decreasing if and only if the permutations avoids the pattern $132$. Adding indecomposability as a condition turns this connection into a bijective map providing the first instance of the type of bijections studied in this paper.

The other reason was to possibly obtain bounds on the growth rates of pattern avoiding permutations. Let $S_n(\pi_1, \dots, \pi_r)$ be the set of permutations of size $n$ avoiding the patterns $\pi_1, \dots, \pi_n$. Define $S_n^*(\pi_1, \dots, \pi_r)$ similarly for indecomposable permutations. The generating function for $S_n^*$ can often be expressed in terms of the generating function of $S_n$, as is done for many different generating functions in Gao et.\ al.~\cite{indecomp}. Thus if we can bound the growth rate of $|S^*_n|$ we can often get bounds for $|S_n|$ as well. Clearly we have
\[S_n^*(\pi_1, \dots, \pi_r) \subseteq \bigcup_{k=0}^{\binom{n}{2}} I_k(\pi_1, \dots, \pi_r)\]

Consider this bound for the pattern $132$. The number of partitions of $n$ grows like $\exp\p{\pi \sqrt{2n/3}}$; see for instance Apostol~\cite[p.~316--318]{apostol}. The bound above would then give us that the growth rate of $|S_n^*(132)|$ is at most $\exp \p{ n \pi \sqrt{1/3}} \approx 6.13^n$. By Gao et.\ al.~\cite{indecomp} we know $|S_n^*(132)|$ grows like the Catalan numbers, so the actual growth rate is $4^n n^{-5/2}$.

Some other cases seem to give tighter bounds, for example $I_k(132, 4321)$. We conjecture based on numerical evidence that this is enumerated by the number of partitions with no parts strictly between the smallest and the largest part. From Bloom and McNew~\cite{bloom} we get $\sum_{k=1}^n |I_k(132, 4321)|$ is asymptotically bounded by $n^2 \log(n)^2$, so $S_n^*(132, 4321)$ grows at most like $n^4 \log(n)^2$. From Albert, Bean, Claesson, Nadeau, Pantone and Úlfarsson~\cite{bean} we get that $S_n(132, 4321)$ grows like $n^4$ and experimental results suggest $S_n^*(132, 4321)$ grows like $n^3$, so the bound is much closer in this case. Another case is $I_k(321, 1342)$, which we conjecture to have $k(k+1)/2+1$ elements. Thus would asymptotically bound $S_n^*(321, 1342)$ as $n^4/8$. From Albert et.\ al.~\cite{bean} we know that $S_n(321, 1342)$ grows like $2^n$, which shows that almost all permutations avoiding $321, 1342$ are decomposable. Experimental results suggest that $S^*_n(321, 1342)$ grows like $n^3/6$ which would make the bound quite good.

A summary of the results in this paper is given in the following table. We omit sets of patterns that give the same sequence as a subset of those patterns. We also omit sequences that decay to zero.

\begin{center}
\rowcolors{2}{gray!25}{white}
\begin{tabular}{|l|l|}
\hline
\rowcolor{gray!50}
Pattern set & Result \\
\hline
123 & No OEIS entry, recurrence in Theorem~\ref{123rec} \\
132 & \texttt{A000041}, Partitions \\
231 & \texttt{A005169}, Fountains \\
321 & \texttt{A006958}, Parallelogram polyominoes \\
123, 231 & No OEIS entry, generating function in Theorem~\ref{fountainsum}  \\
132, 123 & \texttt{A135278}, Pascal triangle with first column removed \\
132, 213 & \texttt{A117629}, Gorenstein partitions \\
132, 231 & \texttt{A000009}, Partitions into distinct parts \\
132, 321 & \texttt{A000005}, Partitions into equal parts, i.e. divisors \\
231, 312 & \texttt{A010054}, Characteristic of triangular numbers \\
231, 321 & \texttt{A000012}, All ones \\
123, 132, 213 & No OEIS entry, generating function in Theorem~\ref{diagonalpascal}  \\
123, 132, 231 & \texttt{A000012}, All ones \\
132, 213, 231 & \texttt{A001227}, Odd divisors \\
123, 132, 213, 231 & \texttt{A103451}, Pascal triangle with values $> 1$ zeroed out \\
\hline
\end{tabular}
\end{center}

The next section will consider single patterns, and introduce many of the fundamental objects we will be mapping to, such as partitions, fountains of coins and parallelogram polyominoes. In section $3$ after we start tackling pairs of patterns, which introduces many refinements of previous maps. We obtain maps to many different types of partitions for example, almost triangular partitions and Gorenstein partitions among them. Lastly in section $4$ all remaining sets of patterns are tackled.

\section{Single patterns}

The sets $I_k(1)$ and $I_k(21)$ have no non-empty elements. Furthermore $|I_k(12)|$ is the characteristic function of the triangular numbers, listed in the OEIS~\cite{oeis} as \texttt{A010054}. This is because to avoid $12$ the permutation must be decreasing and the decreasing permutation of length $n$ has $\binom{n}{2}$ inversions. The single patterns of length $3$ are $123, 132, 213, 231, 312$ and $321$. To reduce the number of cases we consider symmetries on $I_k$, one of them being the \emph{reverse complement} of a permutation, written out explicitly this is 
\[\pi_1 \dots \pi_{n-1} \pi_n \mapsto (n + 1 - \pi_n)(n + 1 - \pi_{n-1})\dots(n + 1 - \pi_1)\]
The number of inversions remains constant under reverse complement. Consider an inversion $i < j$ in $\pi$. The values $\pi_i > \pi_j$ get mapped to $n + 1 - \pi_i, n + 1 - \pi_j$ at indices $n + 1 - i, n + 1 - j$ in the image, so it cancels out. We also see that $\pi$ is decomposable if and only if its reverse complement is, so the reverse complement is an involution on $I_k$ for every $k$. It maps the pattern $132$ to the pattern $213$ and from this we see that $|I_k(213)| = |I_k(132)|$ so we only have to consider one of these patterns. In our case the only symmetries are inverse, reverse complement and the composition of those two maps. This is in contrast to when enumerating by size where both reverse and complement are symmetries unto themselves.

Using these symmetries we see that it suffices to consider $123$, $132$, $231$ and $321$. We start with $132$ and we will make use of a well known bijection. The \emph{inversion table} of a permutation $\pi = \pi_1 \dots \pi_n$ is the sequence $b_1b_2 \dots b_n$ where $b_i$ is the number of values after $\pi_i$ in $\pi$ that are smaller than $\pi_i$. The image under the map $\pi \mapsto b_1 \dots b_n$ is the set of all sequences such that the first value is $\leq n - 1$, the next $\leq n - 2$ and so on. We call such sequences \emph{subdiagonal}. Furthermore, we call elements in a subdiagonal sequence that are equal to their maximum possible value \emph{diagonal} elements.

First we recall a lemma due to Claesson, Jelínek and Steingrímsson that will be used a few times going forward. 

\begin{lemma}[Lemma 10,~\cite{akc}]\label{inv132}
	The inversion table of a permutation is weakly decreasing if and only if the permutation avoids $132$.
\end{lemma}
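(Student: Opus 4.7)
The plan is to prove both directions by isolating how consecutive entries $b_i$ and $b_{i+1}$ of the inversion table compare. Writing $b_i = |\{j > i : \pi_j < \pi_i\}|$ and $b_{i+1} = |\{j > i+1 : \pi_j < \pi_{i+1}\}|$, I would split on the relative order of $\pi_i$ and $\pi_{i+1}$. If $\pi_i > \pi_{i+1}$, then $\pi_{i+1}$ is counted in $b_i$ but not in $b_{i+1}$ and every later $\pi_j < \pi_{i+1}$ is also $< \pi_i$, so $b_i \geq b_{i+1} + 1$. If $\pi_i < \pi_{i+1}$, a quick count gives the key identity
\[ b_{i+1} - b_i = |\{j > i+1 : \pi_i < \pi_j < \pi_{i+1}\}|. \]
This identity is the hinge of the argument, since any $j$ in that set yields a $132$ occurrence at positions $(i, i+1, j)$.

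For the forward direction, if $\pi$ avoids $132$ then the set above is empty whenever $\pi_i < \pi_{i+1}$, so $b_i = b_{i+1}$; combined with the other case this gives $b_i \geq b_{i+1}$ for every $i$.

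For the reverse direction I would argue by contrapositive: assume $\pi$ contains a $132$ pattern and pick an occurrence at positions $a < b < c$ (with $\pi_a < \pi_c < \pi_b$) that minimizes $b - a$. The main step is to show $b = a + 1$. Suppose not, and set $x = \pi_{a+1}$. Splitting on whether $x < \pi_c$ or $x > \pi_c$: in the first case $(x, \pi_b, \pi_c)$ at positions $(a+1, b, c)$ satisfies $x < \pi_c < \pi_b$, giving a $132$ occurrence with gap $b - (a+1)$; in the second case $(\pi_a, x, \pi_c)$ at positions $(a, a+1, c)$ satisfies $\pi_a < \pi_c < x$, giving a $132$ occurrence with gap $1$. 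Either way minimality is violated, forcing $b = a+1$. But then $\pi_a < \pi_{a+1}$ and $j = c$ witnesses the set in the key identity, so $b_{a+1} > b_a$ and the inversion table is not weakly decreasing.

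The main obstacle is the reverse direction: one needs the minimality trick to reduce an arbitrary $132$ occurrence to one whose first two positions are adjacent, since the consecutive-difference identity only detects those. Once this reduction is in place, the two-case analysis on $x$ is essentially forced by the position of $x$ relative to the three values of the pattern.
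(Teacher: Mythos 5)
Your proof is correct. Note that the paper does not prove this statement at all: it is Lemma 10 of Claesson, Jel\'inek and Steingr\'imsson, imported by citation, so there is no in-paper argument to compare yours against. Your self-contained argument is sound: the consecutive-entry identity $b_{i+1}-b_i=\abs{\cp{j>i+1 : \pi_i<\pi_j<\pi_{i+1}}}$ when $\pi_i<\pi_{i+1}$, together with $b_i\geq b_{i+1}+1$ when $\pi_i>\pi_{i+1}$, gives the forward direction immediately, and your minimal-gap reduction for the converse is airtight — if $b>a+1$ then $\pi_{a+1}$ lies either below $\pi_c$ (yielding the occurrence $(a+1,b,c)$ with smaller gap) or above it (yielding $(a,a+1,c)$ with gap one), so a gap-minimal occurrence has its first two positions adjacent, and then $j=c$ witnesses $b_{a+1}>b_a$. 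The only stylistic remark is that the converse can be argued slightly more directly by picking, among all occurrences, one with $b-a$ minimal and observing the same dichotomy you use; that is exactly what you did, so nothing is missing.
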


Now we simply have to formally prove that adding the indecomposability criterion turns this lemma into a bijective map.

\begin{theorem}\label{partitions}
$|I_k(132)|$ is equal to the number of partitions of $k$, listed as \texttt{A000041} in the OEIS.
\end{theorem}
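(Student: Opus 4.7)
The plan is to extend the inversion-table bijection of Lemma~\ref{inv132}. By that lemma, a permutation of length $n$ in $I_k(132)$ is exactly one whose inversion table $b_1 b_2 \cdots b_n$ is weakly decreasing with $\sum b_i = k$; since $b_n = 0$, stripping trailing zeros yields a partition of $k$. I will show that this strip-trailing-zeros map is a bijection by pinning down, for each partition of $k$, the unique length $n$ at which re-padding with zeros yields an indecomposable permutation.

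First I would translate decomposability into a condition on the inversion table. Using the standard reconstruction in which $\pi_i$ is the $(b_i+1)$-th smallest element of $\{1,\dots,n\}$ not yet placed, a short induction on $i$ shows that $\pi_i \leq k$ for every $i \leq k$ iff $b_i + i \leq k$ for every $i \leq k$. Hence $\pi$ decomposes at position $k$ iff $b_i + i \leq k$ for every $i \leq k$, and $\pi$ is indecomposable iff for every $1 \leq k < n$ some $i \leq k$ satisfies $b_i + i \geq k+1$. Specialising to a weakly decreasing $b$, this reduces to the existence of some $j < n$ with $b_j = n - j$: necessity comes from the case $k = n-1$ combined with subdiagonality $b_i \leq n - i$, and sufficiency follows because, given such a $j$, the condition at $k \leq j$ is witnessed by $i = k$ (since $b_k \geq b_j = n - j \geq 1$) and at $j < k < n$ by $i = j$ (since $b_j + j = n$).

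For the inverse map, given a partition $\lambda = (\lambda_1 \geq \cdots \geq \lambda_m > 0)$ of $k$, I would set $n = \max_{i \leq m}(\lambda_i + i)$ and $b = (\lambda_1, \dots, \lambda_m, 0, \dots, 0)$ of length $n$. This $b$ is subdiagonal, weakly decreasing, and attains a diagonal entry at some index $i^* \leq m < n$, so it produces an indecomposable permutation in $I_k(132)$. For any $n' > n$, the padded table has $\lambda_i + i < n'$ for $i \leq m$ and $0 \neq n' - i$ for $m < i < n'$, so no diagonal appears before position $n'$ and the resulting permutation decomposes. Hence $n$ is uniquely determined by $\lambda$, and the strip-trailing-zeros map is the desired bijection.

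The main obstacle is the inversion-table characterisation of decomposability, and in particular its reduction in the weakly-decreasing case to the existence of a diagonal entry strictly before position $n$; once this is in hand the bijection with partitions of $k$ is immediate and identifies $|I_k(132)|$ with the number of partitions of $k$ (sequence \texttt{A000041}).
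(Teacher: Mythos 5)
Your proof is correct and follows essentially the same route as the paper: the same strip-the-trailing-zeros map on weakly decreasing inversion tables, with indecomposability pinning down the unique padding length via the existence of a diagonal entry $b_j = n-j$ with $j<n$. The only difference is presentational—you first prove a general inversion-table criterion for where a permutation decomposes and then specialise it, whereas the paper argues injectivity via the $\pi\oplus 1\oplus\cdots\oplus 1$ observation and surjectivity by a direct contradiction, but the underlying bijection and key facts coincide.
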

\begin{proof}
Let $\pi \in I_k(132)$ and consider its inversion table, it is weakly decreasing by Lemma~\ref{inv132}. The inversion table may have trailing zeroes, so let $\rho_1, \dots, \rho_r$ be the values in the inversion table once the trailing zeroes have been removed. Clearly, the sum of the $\rho_i$ is equal to $k$ and hence $\rho$ is a partition of $k$. We claim that $\pi \mapsto \rho$ is a bijection between $I_k(132)$ and the set of partitions of $k$.

We start by showing that the map $\pi \mapsto \rho$ is injective. As the mapping from permutations to inversion tables is bijective, we only have to establish that dropping the trailing zeroes in the inversion table is an injective operation. Suppose that we have two permutations $\pi$ and $\tau$ that have the same inversion table up to trailing zeroes. Suppose $\tau$ has $\ell$ extra trailing zeroes compared to $\pi$. Then we must have $\tau = \pi \oplus (1 \oplus 1 \oplus \dots \oplus 1)$ where the $1$ has been repeated $\ell$ times. But then $\tau$ is decomposable, so it is not in our domain. Thus the map is injective on the domain.

Finally we show surjectivity. Let  $\rho = (\rho_1, \dots, \rho_r)$ be a partition. We can append zeroes to $\rho$ until it becomes subdiagonal. Let the total number of entries after appending zeroes be $n$, so $\rho = (\rho_1, \dots, \rho_n)$. Now let $d \coloneqq \max_{i=1}^r \rho_i  - (n - i)$, because $\rho$ is subdiagonal we have $d \leq 0$. We have $\rho_r \geq 1$ so if $d < 0$ then $n - r \geq 2$. Therefore if $d < 0$ we could have omitted one of the trailing zeroes and still maintained subdiagonality, so this is not the case. Then there must be some index $i \leq r$ that achieves the maximum $d = 0$, so $\rho_i = n - i > 0$. Treating $\rho$ as an inversion table, let the corresponding permutation be $\pi = \pi_1 \dots \pi_n$. We prove that $\pi$ is indecomposable by contradiction, so assume the first $j < n$ elements of $\pi$ form a component. The last value in a component is smaller than every value after it, so we have $\rho_j = 0$. Furthermore, $(\rho_1, \dots, \rho_j)$ is subdiagonal and $\rho_j$ must be followed by only zeroes as $\rho$ is weakly decreasing. But this contradicts the existence of the index $i$, so $\pi$ must be indecomposable.
\end{proof}

To work with $231$-avoiding permutations we define the \emph{skew-sum} of two permutations $\pi_1 \dots \pi_n$ and $\tau_1 \dots \tau_m$ as a permutation on $n + m$ elements given by $\pi \ominus \tau = (\pi_1 + m) \dots (\pi_n + m) \tau_1 \dots \tau_m$. We will say that $\pi$ is \emph{skew-decomposable} if $\pi = \tau_1 \ominus \tau_2$ for non-empty permutations $\tau_1$ and $\tau_2$. If $\pi$ is not skew-decomposable we call $\pi$ \emph{skew-indecomposable}.

A \emph{fountain of coins} is an arrangement of coins in rows such that the bottom row is full (that is, there are no ``holes''), and such that each coin in a higher row rests on two coins in the row below~\cite{origfountain}, see example in Figure~\ref{231ex}. The connection between the inversions of $231$-avoiding permutations and fountains is not new, it is due to Br\"{a}nd\'{e}n, Claesson and Steingr\'{i}msson~\cite{andersfountain}. But while their result is very similar to ours, we need to incorporate indecomposability into our argument, so we need a separate proof.

\begin{theorem}\label{ik231}
$|I_k(231)|$ is equal to the number of fountains on $k$ coins. The corresponding sequence is entry \texttt{A005169} in the OEIS.
\end{theorem}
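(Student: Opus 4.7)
The plan is to reduce $I_k(231)$ to a recursive description that mirrors the natural recursion on fountains, and to induct on $k$. First, I would show that every $\pi \in I_k(231)$ of length $n$ starts with $n$: if $n$ sat at position $i$ and some pair $a < i < b$ had $\pi_a > \pi_b$, then $\pi_a \pi_i \pi_b$ would be a $231$ pattern. So the values left of $n$ are exactly $\{1, \dots, i-1\}$ and those right of $n$ are exactly $\{i, \dots, n-1\}$, giving a direct-sum factorisation $\pi = \pi_{[1, i-1]} \oplus (1) \oplus \pi'_{[i+1, n]}$; indecomposability then forces $i = 1$. Writing $\pi = n\rho$, the tail $\rho$ is a (possibly decomposable) $231$-avoiding permutation of length $n-1$ with $\operatorname{inv}(\rho) = k - (n-1)$.

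Next, I would take the unique component decomposition $\rho = \sigma_1 \oplus \cdots \oplus \sigma_c$, setting $n_j = |\sigma_j|$ and $k_j = \operatorname{inv}(\sigma_j)$, so $\sum n_j = n - 1$ and $\sum k_j = k - (n-1) < k$. Each $\sigma_j$ inherits both $231$-avoidance and indecomposability, hence $\sigma_j \in I_{k_j}(231)$. Conversely, any direct sum of $231$-avoiders is again $231$-avoiding: a $231$ pattern requires its largest value in its middle position, which is impossible across components whose value-ranges are monotonically ordered.

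I would then define $\Phi \colon I_k(231) \to \{\text{fountains on } k \text{ coins}\}$ recursively. Let $\Phi(\pi)$ be the fountain whose bottom row has $n-1$ coins split into $c$ consecutive blocks of sizes $n_1, \dots, n_c$; the $c-1$ gaps separating blocks are empty in the second row; and above the $n_j - 1$ internal gaps of the $j$-th block one stacks $\Phi(\sigma_j)$, whose bottom row of length $n_j - 1$ fits exactly by the inductive hypothesis applied to $k_j < k$. The coin count checks out: $(n-1) + \sum k_j = k$. The main obstacle is inverting $\Phi$: given a fountain on $k$ coins, read its bottom-row length $b$ (so $n = b+1$), use the empty second-row gaps to partition the bottom into blocks of sizes $(n_1, \dots, n_c)$, identify the sub-fountain sitting above each block $j$, and recover $\sigma_j$ by induction. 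The delicate step is checking that no higher-row coin can span two blocks, which holds because such a coin would have to rest on two adjacent second-row coins flanking an empty boundary gap. Once this is settled, $\Phi$ is bijective and summing over $n$ identifies $|I_k(231)|$ with the fountain numbers \texttt{A005169}.
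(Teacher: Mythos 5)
Your argument is correct, but it follows a genuinely different route from the paper. The paper reverses $\pi$ to get a $132$-avoider, invokes the inversion-table characterisation (Lemma~\ref{inv132} and Theorem~\ref{partitions}) to obtain a partition that is strictly subdiagonal precisely because of indecomposability, and then realises the fountain explicitly by deleting coins from the diagonals of a full triangular fountain; the map is direct and non-recursive, and the inverse is read off from the missing coins in each diagonal. You instead exploit the structural recursion on both sides: indecomposability plus $231$-avoidance forces $\pi = n\rho$, the tail splits into components $\sigma_1 \oplus \cdots \oplus \sigma_c$, and fountains decompose as a bottom row with independent sub-fountains sitting over the maximal blocks of occupied second-row positions. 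This mirrors the continued-fraction structure behind \texttt{A005169} and avoids the partition detour entirely, at the cost of needing a strengthened induction hypothesis (a permutation of length $m$ must map to a fountain with bottom row of exactly $m-1$ coins, which you do track) and the block-decomposition lemma for fountains, whose key point -- that no coin above row two can straddle an empty boundary position, since it would have to rest on two non-adjacent second-row coins -- you identify and justify correctly. One small slip: the factorisation you write when $n$ sits at position $i>1$, namely $\pi_{[1,i-1]} \oplus (1) \oplus \pi'_{[i+1,n]}$, is not right, since the entry at position $i$ is the maximum $n$, not the smallest value of the suffix; the correct statement is simply $\pi = \pi_{[1,i-1]} \oplus \tau$ where $\tau$ is the reduction of $n\pi_{i+1}\cdots\pi_n$. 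This does not affect your conclusion, because the existence of the proper prefix summand already contradicts indecomposability and forces $\pi_1 = n$.
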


\begin{figure}
	\centering
	\begin{tikzpicture}
\node[circle,draw,minimum size=1cm](r1c1) at (0,0) {};
\node[circle,draw,minimum size=1cm, red, dashed, thick](r2c1) at (60:1cm) {};
\node[circle,draw,minimum size=1cm, red, dashed, thick](r3c1) at (60:2cm) {};
\node[circle,draw,minimum size=1cm, red, dashed, thick](r4c1) at (60:3cm) {};
\node[circle,draw,minimum size=1cm, red, dashed, thick](r5c1) at (60:4cm) {};
\node[circle,draw,minimum size=1cm, red, dashed, thick](r6c1) at (60:5cm) {};

\node[circle,draw,minimum size=1cm](r1c2) at ($(r1c1) + (0:1cm)$) {};
\node[circle,draw,minimum size=1cm](r2c2) at ($(r2c1) + (0:1cm)$) {};
\node[circle,draw,minimum size=1cm](r3c2) at ($(r3c1) + (0:1cm)$) {};
\node[circle,draw,minimum size=1cm, red, dashed, thick](r4c2) at ($(r4c1) + (0:1cm)$) {};
\node[circle,draw,minimum size=1cm, red, dashed, thick](r5c2) at ($(r5c1) + (0:1cm)$) {};

\node[circle,draw,minimum size=1cm](r1c3) at ($(r1c2) + (0:1cm)$) {};
\node[circle,draw,minimum size=1cm](r2c3) at ($(r2c2) + (0:1cm)$) {};
\node[circle,draw,minimum size=1cm, red, dashed, thick](r3c3) at ($(r3c2) + (0:1cm)$) {};
\node[circle,draw,minimum size=1cm, red, dashed, thick](r4c3) at ($(r4c2) + (0:1cm)$) {};

\node[circle,draw,minimum size=1cm](r1c4) at ($(r1c3) + (0:1cm)$) {};
\node[circle,draw,minimum size=1cm, red, dashed, thick](r2c4) at ($(r2c3) + (0:1cm)$) {};
\node[circle,draw,minimum size=1cm, red, dashed, thick](r3c4) at ($(r3c3) + (0:1cm)$) {};

\node[circle,draw,minimum size=1cm](r1c5) at ($(r1c4) + (0:1cm)$) {};
\node[circle,draw,minimum size=1cm](r2c5) at ($(r2c4) + (0:1cm)$) {};

\node[circle,draw,minimum size=1cm](r1c6) at ($(r1c5) + (0:1cm)$) {};

\node[red] (t1) at ($(r6c1) + (60:0.75cm)$) {\Large $5$};
\node[red] (t2) at ($(r5c2) + (60:0.75cm)$) {\Large $2$};
\node[red] (t3) at ($(r4c3) + (60:0.75cm)$) {\Large $2$};
\node[red] (t4) at ($(r3c4) + (60:0.75cm)$) {\Large $2$};
\node[red] (t5) at ($(r2c5) + (60:0.75cm)$) {\Large $0$};
\node[red] (t6) at ($(r1c6) + (60:0.75cm)$) {\Large $0$};

\node (m1) at (-4, 0) {\Large $\rho' = 522200$};
\node (m2) at (-4, 2) {\Large $\pi' = 634512$};
\node (m3) at (-4, 4) {\Large $\pi = 215436$};

\draw[<->, thick] (m1) -- (m2) node[midway,right] () {Inversion table};
\draw[<->, thick] (m2) -- (m3) node[midway,right] () {Reverse};
\draw[<->, thick] (m1) -- (-0.7, 0) node[midway,above] () {Fountain};
	\end{tikzpicture}
	\caption{Example of bijective map in Theorem~\ref{ik231}.}
	\label{231ex}
\end{figure}

\begin{proof}

Let $\pi'$ be the reverse of $\pi \in I_k(231)$ and $n$ be the number of elements of $\pi$, this means every pair of indices in $\pi$ is an inversion if and only if that pair is not an inversion in $\pi'$. Therefore $\pi'$ has $\binom{n}{2} - k$ inversions. We note that the indecomposability of $\pi$ is equivalent to the skew-indecomposability of $\pi'$. Since $\pi'$ avoids $132$ we get a partition $\rho'$ of $\binom{n}{2} - k$ in the same manner as in Theorem~\ref{partitions}. Note, however, that since $\pi'$ might be decomposable we cannot recover the value of $n$ from $\rho'$ alone. We know $\rho_i' \leq n - i$ since $\rho_i$ is given by the entries of an inversion table, which is subdiagonal. Suppose $\rho_i' = n - i$ for some $i$, since $\rho'$ is weakly decreasing this means $\rho_j' \geq n - i$ for $j \leq i$. Then the first $i$ elements in $\pi'$ dominate the last $n - i$, meaning that $\pi'$ is skew-decomposable. This contradicts the fact that $\pi$ is indecomposable, so we must have $\rho_i' < n - i$ for all $i$. 

We now define a bijective map from $I_k(132)$ to fountains with $k$ coins. Consider a full fountain of coins, meaning it has $n - 1$ coins in the bottom row, $n - 2$ coins in the one above and so on, so $\binom{n}{2}$ coins in total. Let us consider diagonals going from a coin in the bottom row up and to the right. The $i$-th diagonal has $n - i$ coins. Since $\rho_i' < n - i$ we can remove the last $\rho_i'$ coins from the $i$-th diagonal without altering the bottom row. This leaves us with an arrangement of $\binom{n}{2} - (\binom{n}{2} - k) = k$ coins. Let us show that this results in a fountain. The only way it could fail to is if we do not remove some coin $c$, but remove a coin out from under it. The coin below and to the left of $c$ is in the same diagonal, and since we remove coins starting from the end, we never remove this coin before $c$. Thus we consider the coin below and to the right of $c$. It is in the diagonal after the one $c$ is in. For it to be removed we would have to remove strictly more elements from that diagonal than from the one $c$ is in. But $\rho'$ is a partition and is thus weakly decreasing, so this cannot happen. Thus a $231$-avoiding permutation produces a fountain on $k$ coins.

To show bijectivity we give the inverse. Let a fountain on $k$ coins with $n - 1$ coins in the bottom row be given. We can read out the number of coins missing from each diagonal to get a sequence of numbers. By the same argument as above this sequence is weakly decreasing. Therefore, this gives us a valid partition $\rho'$ of $\binom{n}{2} - k$. Since we know $n$ from the number of coins in the bottom row we can recover $\pi'$ uniquely. Since $\rho_i$ is the number of coins missing from diagonal $i$ we must have $\rho_i' < n - i$. Thus the first $i$ elements of $\pi'$ can never dominate the last $n - i$, so $\pi'$ is skew-indecomposable. Reversing $\pi'$ to get $\pi$ we then get an indecomposable permutation. Since $\pi'$ is constructed from a partition it avoids $132$ by Lemma~\ref{inv132}, so $\pi$ avoids $231$, completing our proof.
\end{proof}

Enumerating $321$-avoiding permutations by size and inversions (allowing decomposable permutations) has been investigated before by Barcucci, Del Lungo, Pergola and Pinzani~\cite{321}. They derive a generating function, but we will need a bijective proof for a later result, so we give a different proof. Our bijective map will map our permitations to \emph{parallelogram polyominoes}. Following the definition given by Bender~\cite{bender} we can define parallelogram polyominoes in terms of two weakly increasing non-negative sequences $(\ell_1, \ell_2, \dots, \ell_n)$ and $(r_1, r_2, \dots, r_n)$ satisfying $\ell_1 = 0$ and $\ell_i, \ell_{i+1} < r_i$ for all $i$. The polyomino itself can be viewed as consisting of cells in the segment $[\ell_i, r_i]$ at height $i$, see Figure~\ref{parallel}.

\begin{figure}
\centering
\begin{tikzpicture}
\foreach \x/\y in {0/0, 1/0, 2/0, 1/1, 2/1, 3/1, 1/2, 2/2, 3/2, 3/3, 3/4, 4/4, 5/4} {
	\draw[fill=lightgray] (\x, \y) -- (\x + 1, \y)  -- (\x + 1, \y + 1) -- (\x, \y + 1) -- cycle;
}
\draw[thick, ->] (-0.5, 0) -- (6.5, 0);
\draw[thick, ->] (0, -0.5) -- (0, 5.5);
\node (L) at (-3, 3)  {\Large $\ell = (0, 1, 1, 3, 3)$};
\node (R) at (-3, 2)  {\Large $r = (3, 4, 4, 4, 6)$};

\node (l1) at (0.25, 0.25) {$0$};

\draw[solid, <->] (0, 1.5) -- (1, 1.5);
\node (l2) at (0.5, 1.25) {$1$};

\draw[solid, <->] (0, 2.5) -- (1, 2.5);
\node (l3) at (0.5, 2.25) {$1$};

\draw[solid, <->] (0, 3.5) -- (3, 3.5);
\node (l4) at (0.5, 3.25) {$3$};

\draw[solid, <->] (0, 4.5) -- (3, 4.5);
\node (l5) at (0.5, 4.25) {$3$};

\draw[dashed, <->] (0, 0.75) -- (3, 0.75);
\node (r1) at (2.5, 0.5) {$3$};

\draw[dashed, <->] (0, 1.75) -- (4, 1.75);
\node (r2) at (3.5, 1.5) {$4$};

\draw[dashed, <->] (0, 2.75) -- (4, 2.75);
\node (r3) at (3.5, 2.5) {$4$};

\draw[dashed, <->] (0, 3.75) -- (4, 3.75);
\node (r4) at (3.5, 3.5) {$4$};

\draw[dashed, <->] (0, 4.75) -- (6, 4.75);
\node (r5) at (5.5, 4.5) {$6$};
\end{tikzpicture}
\caption{Example of parallelogram polyomino.}
\label{parallel}
\end{figure}

\begin{theorem}\label{ik321}
$|I_k(321)|$ equals the number of parallelogram polyominoes with $k$ cells, listed as \texttt{A006958} in the OEIS.
\end{theorem}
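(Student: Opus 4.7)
The plan is to construct an explicit bijection between $I_k(321)$ and parallelogram polyominoes with $k$ cells, using the left-to-right maxima of a $321$-avoiding permutation to index the rows of the polyomino.

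Given $\pi \in I_k(321)$ of length $n$, I would let the left-to-right maxima of $\pi$ sit at positions $p_1 < \cdots < p_s$ with values $M_1 < \cdots < M_s = n$. Because $\pi$ avoids $321$, the values at the remaining positions must themselves be in increasing order: a decreasing pair of non-maxima, together with any left-to-right maximum preceding the first of them, would form a $321$ pattern. I would then set $\ell_i = p_i - i$ and $r_i = M_i - i$ for $1 \leq i \leq s$ and claim that $(\ell_i)$ and $(r_i)$ are the defining sequences of a parallelogram polyomino with $k$ cells.

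Verifying the polyomino axioms is where each property of $\pi$ is used. Both sequences are weakly increasing because $(p_i)$ and $(M_i)$ are strictly increasing, and $\ell_1 = 0$ follows from $p_1 = 1$. The remaining constraints $\ell_i < r_i$ and $\ell_{i+1} < r_i$ translate to $p_i < M_i$ and $p_{i+1} \leq M_i$ respectively; examining the prefix maxima of $\pi$ shows that the conjunction of these inequalities across all $i$, together with $M_s = n$, is exactly the indecomposability condition $\max(\pi_1, \dots, \pi_j) > j$ for all $j < n$. For the cell count, I would note that every inversion of $\pi$ has its first index at a left-to-right maximum (non-maxima form an increasing subsequence, so they do not invert with each other or with later left-to-right maxima), and among the $M_i - 1$ values smaller than $M_i$, exactly $p_i - 1$ of them sit at positions before $p_i$ (since everything before $p_i$ is dominated by $M_i$). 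The remaining $M_i - p_i = r_i - \ell_i$ smaller values are to the right of $p_i$, giving $\inv(\pi) = \sum_i (r_i - \ell_i) = k$.

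For the inverse I would set $n = r_s + s$, $p_i = \ell_i + i$, $M_i = r_i + i$, place $M_i$ at position $p_i$, and fill the remaining $n - s$ positions in increasing order with the $n - s$ values outside $\{M_1, \dots, M_s\}$. Indecomposability of the reconstructed $\pi$ is a direct rewriting of the polyomino inequalities. The subtle part is $321$-avoidance, which requires that every non-maximum value $m_j$ placed at non-maximum position $q_j$ be dominated by the running maximum $M_{i(j)}$ at $q_j$. I would first show that $m_j \leq q_j$ by observing that $p_i < M_i$ forces the partial counts of the $p_i$ to weakly dominate those of the $M_i$ up to any threshold, and then apply $\ell_{i+1} < r_i$ to get $M_{i(j)} \geq p_{i(j)+1} > q_j \geq m_j$, handling the boundary case $i(j) = s$ by the explicit bound $m_j < n = M_s$. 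The main obstacle is this careful bookkeeping translating the two polyomino inequalities into the permutation-side conditions of indecomposability and $321$-avoidance; once the left-to-right maxima have been identified as the correct indexing of the rows, the map itself is essentially forced.
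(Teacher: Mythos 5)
Your proposal is correct and follows essentially the same route as the paper: the same assignment $\ell_i = p_i - i$, $r_i = M_i - i$ from the positions and values of the left-to-right maxima, the same translation of the polyomino inequalities into indecomposability, and the same inversion count $\sum_i (M_i - p_i)$. The only difference is that you spell out the inverse reconstruction in more detail, where the paper simply asserts that the steps can be traced backward.
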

\begin{proof}
We call $\pi_k$ a \emph{left-to-right maximum} of $\pi = \pi_1 \dots \pi_n$ if $\pi_i < \pi_k$ for all $i < k$.  Suppose $\pi$ has $s$ left-to-right maxima at indices $i_1, \dots, i_s$. Consider the values in $\pi$ that are not left-to-right maxima, let us call these values \emph{short} going forward. If any two short values form an inversion, there must be a left-to-right maximum left of them that forms an occurrence of $321$. Thus the short values of $\pi$ are in increasing order, so $\pi$ is uniquely determined by $i_1, \dots, i_s$ and $\pi_{i_1}, \dots, \pi_{i_s}$. We can also recover $n$ since $\pi_{i_s} = n$. 

We now show that taking $\ell \coloneqq (i_1 - 1, i_2 - 2, \dots, i_s- s)$ and $r \coloneqq (\pi_{i_1} - 1, \dots, \pi_{i_s} - s)$ we get a parallelogram polyomino. The sequence $i_1, \dots, i_s$ is strictly increasing, so $\ell$ is weakly increasing. Similarly since $\pi_{i_1}, \dots, \pi_{i_s}$ are left-to-right maxima they are strictly increasing, so $r$ is weakly increasing. The first value of a permutation must be a left-to-right maximum, so $i_1 = 1$ and so $\ell$ starts with a zero and is therefore also non-negative. Next we note that because $\pi_{i_j}$ is a left-to-right maximum it must be greater than the $i_j - 1$ different values before it, so $\pi_{i_j} \geq i_j$. But if $\pi_{i_j} = i_j$ then the first $i_j$ values  of $\pi$ are a permutation unto themselves, making $\pi$ decomposable. Thus $i_j < \pi_{i_j}$ for every $j$, so we get $\ell_j < r_j$ by substracting $j$ from both sides. This leaves only the condition $\ell_{j+1} < r_j$, which simplifies to $i_{j+1} \leq \pi_{i_j}$. This follows similarly since the $\pi_{i_j}$ is the largest element out of the $i_{j+1} - 1$ elements preceding $\pi_{i_{j+1}}$ and they can't exactly be the elements $1$ through $i_{j+1} - 1$ due to indecomposability.

Starting with a parallelogram polyomino we can trace the same steps backward to obtain a permutation in $I_k(321)$ by the same arguments.

All there is left to prove is that this map takes permutations with $k$ inversions to polyominos with $k$ cells. The number of cells in the polyomino defined by $(i_1 - 1, \dots, i_s - s)$ and $(\pi_{i_1} - 1, \dots, \pi_{i_s} - s)$ is
\[\pi_{i_1}  - 1 + \dots + \pi_{i_s} - s - (i_1 - 1) - \dots - (i_s - s) = \pi_{i_1} - i_1 + \dots + \pi_{i_s} - i_s\]
Since the short values and left-to-right maxima are both in internally increasing order, the inversions are all between left-to-right maxima and short values. Because $\pi_{i_j}$ is greater than the $i_j - 1$ elements before it and greater than $\pi_{i_j} - 1$ elements in total, it must be greater than $\pi_{i_j} - i_j$ elements that come after it. All inversions are of this form, so summing over these values gets us the total number of inversions, completing our proof.
\end{proof}

Interpreting parallelogram polyominoes as permutations yields a formula for efficiently computing new terms of \texttt{A006958}, taking $\mathcal{O}(k^2)$ time and space to compute $|I_k(321)|$. We need a small lemma first.

\begin{lemma}\label{321inv}
A sequence $\rho$ of non-negative integers is the inversion table of a permutation in $I_k(321)$ (after adding the minimal required number of trailing zeroes) for some $k$ if and only if the following conditions all hold. Whenever $\rho_i > 0$ and $\rho_j > 0$ with $i < j$ are separated by $\ell$ zeroes then $\rho_j \geq \rho_i - \ell$. Furthermore, $\rho_i > 0$ can be followed by at most $\rho_i - 1$ zeroes, aside from trailing zeroes, and $\rho_1 \neq 0$.
\end{lemma}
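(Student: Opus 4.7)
The plan is to work through the $321$-avoiding structure underlying the proof of Theorem~\ref{ik321}. The key structural observation is that for $\pi \in I_k(321)$ with left-to-right maxima at positions $i_1 < \dots < i_s$, $321$-avoidance forces the short values into increasing order, so the inversion-table entry $b_j$ vanishes at every short position and equals $\pi_{i_t} - i_t$ at each $i_t$; indecomposability further forces $\pi_{i_t} > i_t$, so these entries are strictly positive. Hence the non-zero positions of $\rho$ are exactly the LR max positions, with $\pi_{i_t} = \rho_{i_t} + i_t$.

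For the forward direction, I would start from $\pi \in I_k(321)$ and translate each target condition. Condition~(3) is immediate, since indecomposability forces $\pi_1 \geq 2$. Condition~(2) rephrases the indecomposability bound $i_{t+1} \leq \pi_{i_t}$, which prevents a leading component up to index $i_{t+1}-1$; substituting $\pi_{i_t} = \rho_{i_t} + i_t$ yields at most $i_{t+1} - i_t - 1 \leq \rho_{i_t} - 1$ zeros before the next non-zero. Condition~(1) encodes the strict increase $\pi_{i_{t+1}} > \pi_{i_t}$ of the LR maxima: for consecutive non-zeros separated by $\ell = i_{t+1} - i_t - 1$ zeros, the substitution gives $\rho_{i_{t+1}} \geq \rho_{i_t} - \ell$.

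For the reverse direction, I would construct $\pi$ explicitly. Let $j_1 < \dots < j_s$ be the non-zero positions of $\rho$ (so $j_1 = 1$ by~(3)), set $v_t \coloneqq \rho_{j_t} + j_t$, let $n \coloneqq v_s$, place $v_t$ at position $j_t$, and fill the remaining positions with $\{1, \dots, n\} \setminus \{v_1, \dots, v_s\}$ in increasing order. Condition~(1) applied along the chain of consecutive non-zeros gives $v_1 < v_2 < \dots < v_s$, so the $v_t$ are distinct and all at most $n$. A counting check showing that $j_t - t$ short positions precede $j_t$ while $v_t - t$ remaining values lie below $v_t$ verifies that each $v_t$ is genuinely an LR maximum. $321$-avoidance is automatic because the short values are placed in increasing order; indecomposability follows from~(2), which gives $v_t = \rho_{j_t} + j_t \geq j_{t+1}$, so the prefix maximum strictly exceeds the current index. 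Finally, computing the inversion table recovers $\rho$ padded with exactly $n - j_s = \rho_{j_s}$ trailing zeros, and this is minimal since~(1) ensures $\max_j (\rho_j + j)$ is attained at $j = j_s$.

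The main obstacle is the counting step in the reverse direction, ensuring that the placement of the $v_t$ really does produce LR maxima rather than merely distinct large values. The key inequality $v_t > j_t$, which follows from $\rho_{j_t} \geq 1$, guarantees that the $j_t - t$ short values preceding position $j_t$ are drawn from the $v_t - t$ remaining values below $v_t$, and so are all dominated by $v_t$; without condition~(1) forcing the $v_t$ to be strictly increasing this counting would collapse.
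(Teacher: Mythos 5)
Your proof is correct, and it takes a route that is related to but noticeably different from the paper's. The paper works with the \emph{other} canonical splitting of a $321$-avoider: it pairs the zero entries of the inversion table with right-to-left minima and the non-zero entries with ``tall'' values, and argues both directions by counting inversions directly; in particular, in the converse direction it defines $\pi$ abstractly through the inversion-table correspondence, shows the tall values increase by an inversion-count comparison, and gets indecomposability by contradiction from a hypothetical decomposition $\pi = \tau \oplus \sigma$. You instead route everything through the left-to-right-maxima parametrization already set up in Theorem~\ref{ik321}, identifying the non-zero entries with the LR maxima via $\rho_{i_t} = \pi_{i_t} - i_t$, so that the three conditions of the lemma become exactly the three inequalities $\pi_{i_t} > i_t$, $i_{t+1} \le \pi_{i_t}$ and $\pi_{i_{t+1}} > \pi_{i_t}$ from that proof; and in the converse you build the permutation explicitly (place $v_t = \rho_{j_t}+j_t$ at position $j_t$, fill increasingly) and recompute its inversion table, with your counting check $j_t - t \le v_t - t$ doing the work that the paper's abstract argument does. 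Your version has the advantage of making the link to the polyomino data of Theorem~\ref{ik321} explicit and constructive, at the cost of the bookkeeping needed to verify the explicit construction; the paper's version avoids the construction but needs the separate contradiction argument for indecomposability. One shared looseness, not a gap on your part: both your argument and the paper's implicitly treat $\rho$ up to trailing zeroes (e.g.\ $\rho=(1,0,0)$ satisfies the three conditions yet, read literally with its own trailing zeroes, corresponds to the decomposable $213$); the paper only makes this convention explicit later, in the proof of Theorem~\ref{321rec}, and your choice $n=v_s$ amounts to adopting the same convention.
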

\begin{proof}
Let $\rho$ be a sequence of non-negative integers. Let $\pi$ be the corresponding permutation when $\rho$ is considered as an inversion table after adding the minimum number of trailing zeroes, and assume $\pi \in I_k(321)$ for some $k$. By the same argument as in the proof of Theorem~\ref{ik321} above we can split $\pi \in I_k(321)$ into two subsequences, right-to-left minima and other values, such that both subsequences are increasing. We will call these other values \emph{tall} values. If $\pi_i$ is a right-to-left minimum it is smaller than every element after it, so the corresponding entry $\rho_i$ in the inversion table must be $0$. Since $\pi$ is indecomposable this means $\rho_1 \neq 0$, so $\pi_1$ is tall. Next consider an index $i$ such that $\pi_i$ is tall, and so $\rho_i > 0$. Say it has $\ell$ zeroes following it. Those all correspond to left-to-right minima, and since $\pi_i$ is tall they are all smaller than $\pi_i$. Thus $\pi_i$ is greater than the next $\ell$ values, so $\rho_i \geq \ell$. But if $\pi_i$ is greater than these values and no other values, $\pi$ would be decomposable, so $\ell \leq \rho_i - 1$. By the same argument if $i < j$ with $\rho_i, \rho_j > 0$ are separated by $\ell$ zeroes then those zeroes all correspond to elements who form an inversion with $\pi_i$. Since the tall values are increasing $\pi_i < \pi_j$, so aside from these $\ell$ values $j$ is in all the inversions $i$ is in. Thus $\rho_j \geq \rho_i - \ell$, so $\rho$ satisfies the required constraints.

For the other direction, let $\rho$ be a sequence satisfying the above constraints. We define $\pi = \pi_1 \dots \pi_n$ as in the other case, which we want to show is in $I_k(321)$. It suffices to show that $\pi$ is indecomposable and is the union of two increasing subsequences. We let those subsequences be the right-to-left minima and the tall values. The right-to-left minima are increasing by defintion, so we consider the tall values. The value $\pi_i$ is a right-to-left minimum if and only if $\rho_i = 0$, so the tall values have $\rho_i > 0$. Let $i$ be some index such that $\rho_i > 0$ and $j$ be the next index after $i$ such that $\rho_j > 0$. Suppose there are $k$ zeroes between $\rho_i$ and $\rho_j$, then our condition says that $\rho_j \geq \rho_i - k$. Every zero corresponds to a right-to-left minimum, and hence $\pi_i$ is greater than all those elements. This means we cannot have $\pi_i \geq \pi_j$ since then $\rho_i \geq \rho_j + k + 1$, which is a contradiction. Thus $\pi_i < \pi_j$, so the tall values are in increasing order, which means $\pi$ avoids $321$. 

Suppose $\pi$ is decomposable with $\pi = \tau \oplus \sigma$. Since $\rho_1 \neq 0$ there must be some greatest index $r$ such that $\rho_r > 0$ and $\pi_r$ is in $\tau$. Then $\tau_r$ must be followed by $\tau_r$ zeroes for $\tau$ to be a valid component, breaking the assumption on $\rho$ since $\rho_r = \tau_r$ can have at most $\rho_r - 1$ zeroes after it. Therefore $\pi$ is indecomposable which completes our proof.
\end{proof}

\begin{theorem}\label{321rec}
$|I_k(321)| = a_{k, 1}$ where 
\[a_{n,m} = \begin{cases}
1 & \text{ if } n = 0 \\
\sum_{i = 1}^n a_{n - i, i} & \text{ if } m = 1 \\
a_{n, m - 1} + \sum_{i = m}^n a_{n - i, i} & \text{ otherwise} \\
\end{cases}\]
\end{theorem}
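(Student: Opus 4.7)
The plan is to apply Lemma~\ref{321inv} and reparametrise the admissible inversion tables so that a clean recurrence emerges. First, I would note that any admissible sequence $\rho$ (one satisfying the conditions of Lemma~\ref{321inv}, with trailing zeros stripped) is determined by its positive entries $p_1, \dots, p_r$ and the gap sizes $z_j \in \{0, 1, \dots, p_j - 1\}$ between $p_j$ and $p_{j+1}$, subject to $p_{j+1} \geq p_j - z_j$. Substituting $t_j := p_j - z_j$ rewrites the constraints as $t_j \in \{1, \dots, p_j\}$ and $p_{j+1} \geq t_j$; so for fixed $p_j$ and $p_{j+1}$ the number of valid $t_j$ is exactly $\min(p_j, p_{j+1})$, and the number of admissible sequences with positive entries $(p_1, \dots, p_r)$ is $\prod_{j=1}^{r-1} \min(p_j, p_{j+1})$.

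Next, I would propose the combinatorial interpretation of $a_{n, m}$ that makes the recurrence transparent: $a_{n, m}$ counts pairs consisting of a (possibly empty) tuple $(p_1, \dots, p_r)$ of positive integers summing to $n$ together with a ``leading transition'' $t_0 \in \{1, \dots, \min(m, p_1)\}$, where the empty tuple is counted as one. Under this interpretation $a_{0, m} = 1$ is immediate, and for $m = 1$ one has $\min(1, p_1) = 1$, which forces $t_0$ and reduces $a_{n, 1}$ to the total count of admissible sequences with sum $n$, equal to $|I_n(321)|$ by Lemma~\ref{321inv}.

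The recurrence is then obtained by conditioning on $p_1 = i$: the $\min(m, i)$ choices for $t_0$ and the $a_{n - i, i}$ choices for the rest (with $p_1 = i$ now playing the role of the virtual preceding entry for a continuation of sum $n - i$) combine to give
\[ a_{n, m} = \sum_{i=1}^{n} \min(m, i) \cdot a_{n - i, i}. \]
Finally, I would verify that this uniform recurrence is equivalent to the piecewise one stated in the theorem by taking the difference $a_{n, m} - a_{n, m-1}$ for $m \geq 2$, which retains only the terms with $i \geq m$, and by reading off the case $m = 1$ directly.

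The hard part will be identifying the correct role of the second parameter~$m$: it is not a statistic of the inversion table itself but a virtual predecessor that records the constraint inherited from an imagined previous positive entry. Getting this conceptual step right is what makes the boundary factor $\min(m, p_1)$ match the interior factors $\min(p_j, p_{j+1})$ and so lets a single recurrence handle both the overall count $|I_k(321)|$ and all the auxiliary values $a_{n, m}$.
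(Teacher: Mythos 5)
Your proof is correct and takes essentially the same approach as the paper: both rely on Lemma~\ref{321inv} and interpret the second index $m$ as the lower bound on the next non-zero inversion-table entry (your ``virtual predecessor''), so that $a_{n,m}$ counts the admissible continuations of sum $n$; the only difference is bookkeeping, since you bundle zero-runs into gaps weighted by $\min(p_j,p_{j+1})$, obtain the uniform recurrence $a_{n,m}=\sum_{i=1}^{n}\min(m,i)\,a_{n-i,i}$, and recover the stated piecewise form by differencing, while the paper appends one entry at a time so the piecewise form falls out directly. One small wording fix: define $a_{n,m}$ as counting the tuple together with \emph{all} transitions $t_0,t_1,\dots,t_{r-1}$ (equivalently as the weighted sum $\sum \min(m,p_1)\prod_j \min(p_j,p_{j+1})$), not just the leading one, since that is what your $m=1$ specialization and your conditioning on $p_1=i$ actually use.
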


\begin{proof}
In this proof we will consider sequences equal if they differ only in their number of trailing zeroes, since by the same argument as in the proof of Theorem~\ref{partitions} we can uniquely recover the number of trailing zeroes due to the indecomposability condition. We show that $a_{n,1}$ enumerates sequences given by the description in Lemma~\ref{321inv}. More specifically we show that $a_{n,m}$ enumerates prefixes of such sequences with sum $n$ and whose next non-zero element must be at least $m$. Clearly if $n = 0$ there is only one such prefix, the empty prefix. For the other two cases we consider what element could be appended to our prefix to create a longer prefix. If $m = 1$ then our current prefix is either empty or ends with a $1$, so we cannot append a zero. But we can append any positive integer $i \leq n$, which leaves $n - i$ of the sum left, and the next non-zero value must be at least $i$, giving the sum above. In the final case our prefix's last non-zero element is greater than $1$, so we can append a zero. If we append a zero the next non-zero element can be allowed to be one less, so we get $a_{n,m-1}$ for this case. This also prevents us from having more than $x - 1$ zeroes after the value $x$ as we create our prefix. The sum is the same as in the last case, completing our proof.
\end{proof}

Not only can this view help with computing new terms, but it also gives rise to new bijective correspondences. Consider fountains of coins where we only count coins in even rows. We can still place coins as we like, but when tallying the number we only count those in the bottom row, those 2 rows up, 4 rows up and so on. We will call such fountains with $n$ counted coins \emph{even fountains of size $n$}. Bala~\cite{fountain} has tackled the problem of enumerating such even fountains. He shows that they are equinumerous to parallelogram polyominoes by an algebraic argument, and leaves it open to find a bijective proof. Using $I_k(321)$, we give this bijective proof. 

\begin{theorem}\label{parallelo}
There exists a bijective map taking parallelogram polyominoes to even fountains of size $k$.
\end{theorem}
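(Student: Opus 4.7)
The plan is to realize the bijection in two stages. By Theorem~\ref{ik321} we already have a bijection between parallelogram polyominoes with $k$ cells and permutations in $I_k(321)$, so the heart of the argument is a new bijection from $I_k(321)$ to even fountains of size $k$, which I would construct via the inversion-table characterization in Lemma~\ref{321inv}.

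Given $\pi \in I_k(321)$, write its inversion table (suppressing trailing zeros) as $\rho = \rho_1 \rho_2 \cdots \rho_n$; the nonzero entries of $\rho$ then sum to $k$. I would decompose $\rho$ into maximal blocks of the form ``nonzero entry followed by a (possibly empty) run of zeros'' and have each block describe a pair of consecutive rows of the fountain: the nonzero value $\rho_{i_j}$ gives the number of coins in the $j$-th even row, while the length of the subsequent zero run gives the number of coins in the odd row immediately above it. Reading the blocks from left to right then builds the fountain from the bottom up.

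The three conditions of Lemma~\ref{321inv} should translate into the geometric constraints of a fountain as follows: the condition $\rho_1 \neq 0$ guarantees a nonempty bottom row; the bound of at most $\rho_i - 1$ zeros following a nonzero $\rho_i$ matches the requirement that an odd row resting on an even row of $\rho_i$ coins has at most $\rho_i - 1$ coins; and the almost-decreasing condition $\rho_j \geq \rho_i - \ell$ encodes the nesting requirement that coins in a higher even row must be supported through the intervening odd row by coins below. The inverse map reads off one pair of rows at a time from the bottom of the fountain, recovering the blocks of $\rho$ and hence $\pi$ uniquely (the trailing zeros of the inversion table being forced by indecomposability, as in the proof of Theorem~\ref{partitions}).

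The main obstacle will be making the positional part of the correspondence precise, since a fountain row carries not only a coin count but also a specific horizontal placement relative to the row below. I would need to fix a placement convention (for instance, using the zero-run length to determine the offset of the odd row and the size decrease to determine the offset of the next even row) and then verify carefully that the constraints of Lemma~\ref{321inv} correspond exactly to the valid placements allowed by the resting condition. Once this is done, bijectivity follows by reading the map in both directions, and the resulting fountain automatically has size $k$ because it equals the sum of the nonzero entries of $\rho$.
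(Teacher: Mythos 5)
Your reduction---compose the bijection of Theorem~\ref{ik321} with a map from the inversion tables characterized by Lemma~\ref{321inv} to even fountains---is exactly the paper's starting point, but the map you then propose does not work, and the part you defer (``fixing a placement convention'') is where all the actual content lies. The concrete assignment you give---nonzero entry $\rho_{i_j}$ equals the number of coins in the $j$-th counted row, and the subsequent zero-run length equals the number of coins in the odd row above it---fails already in small cases: $\pi = 2341 \in I_3(321)$ has inversion table $1,1,1,0$, and your blocks would demand three counted rows of one coin each separated by empty odd rows, which is not a fountain, since every coin above the bottom row must rest on two coins below. More structurally, in any fountain a row of $m$ coins supports at most $m-1$ coins in the row above, so counted-row sizes strictly decrease going up, whereas the nonzero entries of $\rho$ obey the almost-increasing condition $\rho_j \geq \rho_i - \ell$ of Lemma~\ref{321inv} (sequences like $1,1,1$ or $1,2$ occur); no reading order of your blocks reconciles these monotonicities. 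Finally, coin counts per row cannot be the right data at all: the two even fountains of size $3$ with three coins in the bottom row and a single coin in the second row (in either of its two possible positions) have identical row-size data, so a map built only from row counts is not injective---precisely the placement issue you acknowledge but leave unresolved.

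The paper's proof resolves this by working along zig-zag paths rather than rows: for each bottom-row coin, from left to right, it peels off a path that moves up-and-right through the fountain, dropping down-and-right only when forced on odd-row coins, and records the number of counted (red) coins removed; this sequence, with a single appended zero, is the inversion table (see Figure~\ref{figone}). Verifying the conditions of Lemma~\ref{321inv} then requires a genuine induction comparing consecutive paths, and bijectivity is established by a structural induction showing that a path of each admissible length can be prepended to a given fountain in exactly one way. None of this is recoverable from a row-by-row block reading, so as it stands your proposal has a real gap rather than being a different route to the result.
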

\begin{proof}
Since Theorem~\ref{ik321} is proved by bijection, it suffices to map the inversion tables of elements in $I_k(321)$ bijectively to even fountains of size $k$. 

Write out the fountain in the usual manner (see Figure~\ref{figone}), calling coins in even rows red and the ones in odd rows black for convenience. For each coin in the bottom row, going from left to right, we do the following procedure repeatedly:

\begin{itemize}
\item If we are on a red coin we remove it and move to the coin above and to the right. If there is no such coin we stop.
\item If we are on a black coin we remove it and move to the coin above and to the right. If there is no such coin we move to the coin below and to the right instead.
\end{itemize}

Once done with a coin in the bottom row we write down the number of red coins removed during this procedure. This produces a sequence of numbers, which we then append a single zero to. Call the result $\rho$. We claim this procedure is our desired map.

\begin{figure}[h]
\centering
\begin{tikzpicture}
\node[circle,draw,minimum size=1cm, red, dashed, thick](r1c1) {};
\node[circle,draw,minimum size=1cm](r2c1) at (60:1cm) {};
\node[circle,draw,minimum size=1cm, red, dashed, thick](r3c1) at (60:2cm) {};
\node[circle,minimum size=1cm](r4c1) at (60:3cm) {};

\node[circle,draw,minimum size=1cm, right=0mm of r1c1, red, dashed, thick](r1c2) {};
\node[circle,draw,minimum size=1cm, right=0mm of r2c1](r2c2) {};
\node[circle,draw,minimum size=1cm, right=0mm of r3c1, red, dashed, thick](r3c2) {};
\node[circle,draw,minimum size=1cm, right=0mm of r4c1](r4c2) {};

\node[circle,draw,minimum size=1cm, right=0mm of r1c2, red, dashed, thick](r1c3) {};
\node[circle,draw,minimum size=1cm, right=0mm of r2c2](r2c3) {};
\node[circle,draw,minimum size=1cm, right=0mm of r3c2, red, dashed, thick](r3c3) {};

\node[circle,draw,minimum size=1cm, right=0mm of r1c3, red, dashed, thick](r1c4) {};
\node[circle,draw,minimum size=1cm, right=0mm of r2c3](r2c4) {};

\node[circle,draw,minimum size=1cm, right=0mm of r1c4, red, dashed, thick](r1c5) {};
\node[circle,minimum size=1cm, right=0mm of r2c4](r2c5) {};

\node[circle,draw,minimum size=1cm, right=0mm of r1c5, red, dashed, thick](r1c6) {};
\node[circle,minimum size=1cm, right=0mm of r2c5](r2c6) {};

\node[circle,draw,minimum size=1cm, right=0mm of r1c6, red, dashed, thick](r1c7) {};
\node[circle,draw,minimum size=1cm, right=0mm of r2c6](r2c7) {};

\node[circle,draw,minimum size=1cm, right=0mm of r1c7, red, dashed, thick](r1c8) {};

\draw[->, very thick, gray] (0, 0) -- ++(60:1cm) -- ++(60:1cm);
\draw[->, very thick, gray] (1, 0) -- ++(60:1cm) -- ++(60:1cm) -- ++(60:1cm) -- ++(-60:1cm);
\draw[->, very thick, gray] (2, 0) -- ++(60:1cm) -- ++(-60:1cm) -- ++(60:1cm) -- ++(-60:1cm);
\draw[->, very thick, gray] ($(6, 0) + (0.1,0)$) -- ++(60:1cm) -- ++(-60:1cm);

\node[below=0mm of r1c1] (t1) {$2$};
\node[below=0mm of r1c2] (t2) {$3$};
\node[below=0mm of r1c3] (t3) {$3$};
\node[below=0mm of r1c4] (t4) {$0$};
\node[below=0mm of r1c5] (t5) {$0$};
\node[below=0mm of r1c6] (t6) {$1$};
\node[below=0mm of r1c7] (t7) {$2$};
\node[below=0mm of r1c8] (t8) {$0$};
\end{tikzpicture}
\caption{Example of bijective map in Theorem~\ref{parallelo}.}
\label{figone}
\end{figure}

First we prove that $\rho$ is the inversion table of an element in $I_k(321)$. It suffices to show that $\rho$ fits the description given by Lemma~\ref{321inv}. The path for the $i$-th coin from the right can move at most $i - 1$ times upwards and at most $i - 1$ times downwards. Since every other coin encountered is red, this means the resulting value can be at most $i$. Since we append a single zero afterwards $\rho$ is subdiagonal.

For a value $\rho_i$ to be zero, the path corresponding to the next non-zero value $\rho_j = x$ to the left must have touched the $i$-th coin in the bottom row. That path can touch at most $x - 1$ coins in the bottom row aside from where it started, so a value of $x > 0$ can be followed by at most $x - 1$ zeroes. 

Next we prove that if the $i$-th coin and $(i+k+1)$-th coin are separated by $k$ zeroes, where $k$ can be zero, and $\rho_{i+k+1}, \rho_i > 0$ then $\rho_{i+k+1} \geq \rho_i - k$. We start with the case $k > 0$. In this case, the path corresponding to $\rho_i$ must have touched $k$ coins in the bottom row, then continued on from the coin corresponding to $\rho_{i+k}$. Since the path can never cross down below a red coin once above it, the path must have zig-zagged up and down alternatingly until reaching $\rho_{i+k}$. This means we can ignore what is to the left of $\rho_{i+k}$ and consider $\rho_{i+k}$ to have a value of $\rho_i - k$, and refer to the case where $k = 0$. 

For $k = 0$ we have to prove $\rho_i \leq \rho_{i+1}$ for $\rho_i, \rho_{i+1} > 0$. Denote the path corresponding to $\rho_i$ by $P$ and the path corresponding to $\rho_{i+1}$ by $Q$. We will show that for any red coin that $P$ touches aside from the first, $Q$ will touch the black coin below and to the right of it. This means $Q$ touches at least $\rho_i - 1$ black coins, and thus $\rho_{i+1} \geq \rho_i - 1 + 1$, which is what we want to prove. We use induction along the number of steps of the path. For the base case we see that because $\rho_{i+1} \neq 0$, $P$ must start with two upwards steps. Thus $Q$ will start with an upward step as well, so the base case is true. Now for the inductive step $P$ touches some red coin, and $Q$ touches the black coin below and to the right. There are two cases, the first one being that $P$ takes two upward steps next. Since $P$ can never go below that red row again in this case, $Q$ is free to take two upward steps as well, and thus the induction step is complete. In the other case $P$ takes one upward step, then a downward step. Then $P$ ends on a red coin not in the bottom row after these steps. This means $Q$ is free to take a downward step, followed by an upward step since the coins make a valid fountain, completing the induction step. Thus both cases are done, and by induction $\rho_i \leq \rho_{i+1}$.

What remains is to show is that this map is bijective. We note that our map removes a path of coins from the fountain to construct the leftmost value, then applies itself to the remaining fountain to recursively create the remaining suffix. Since the empty fountain is uniquely mapped to the sequence of a single zero, we can use structural induction. We start with injectivity, in which case the problem reduces to showing that there is only one path of any given length you can add onto the left of a fountain. But this can be seen from the fact that as you lengthen the new path, the new coins have uniquely determined positions. A new black coin is always placed above and to the right, and the following red coin can be placed up and to the right if and only if the slot below and to the right is already occupied. Thus the new coins only have one place to go, so the path only has one valid shape for any given length, giving us injectivity. 

This leaves only surjectivity. We note that a single path corresponds to a single value in the inversion table along with a possibly empty run of consecutive zeroes. By Lemma~\ref{321inv} we can see that removing such a sequence of values from the front of an inversion table of a permutation in $I_k(321)$ leaves us with an inversion table in $I_{k - \ell}(321)$ where $\ell$ is the non-zero value removed. Thus we can use the same structural induction as above. Let $\pi \in I_k(321)$, we will show that the lengths of the paths that can be prepended to the corresponding even fountain are exactly the non-zero values that can be prepended to its inversion table $\rho = (\rho_1, \dots, \rho_n)$, along with some number of zeroes. By Lemma~\ref{321inv} we never prepend a single zero, so our path is always non-empty. If we are prepending $\ell$ zeroes then our path will start by touching $\ell$ coins in the bottom row, so we can reduce to the case where our path is $\ell$ steps shorter and starts $\ell$ steps further to the right. Therefore by Lemma~\ref{321inv} we only have to show that any path of length at least $1$ and at most equal to $\rho_1$ can be achieved. We can always remove two coins at a time from the end of a valid new path and have a smaller valid new path, so we only have to show the existence of a path of length $\rho_1$. Let $P$ be the path corresponding to $\rho_1$. Above and to the left of each black coin in $P$ place a red coin. If two adjacent such red coins are at different heights, a black coin will fit between them as that black coin lies to the left of $P$. Otherwise a black coin will fit above them as it lies above $P$. This way we construct a valid path of maximum length. Thus the map is surjective and therefore bijective, completing our proof.
\end{proof}

This leaves us only with $I_k(123)$, the only single pattern giving rise to a sequence not in the OEIS. In accordance with how we define decomposability for permutations, we will consider the sum of two subdiagonal sequences $\rho_1, \dots, \rho_k$ and $\tau_1, \dots, \tau_{\ell}$ to be $\rho_1, \dots, \rho_k, \tau_1 + k, \dots, \tau_{\ell} + k$. As a reminder we say that a sequence of integers of length $n$ is \emph{subdiagonal} if the first value is $\leq n - 1$, the next $\leq n - 2$ and so on. A subdiagonal sequence is then indecomposable if it cannot be written as the sum of two non-empty subdiagonal sequences.

\begin{theorem}\label{123rec}
$|I_k(123)|$ enumerates indecomposable subdiagonal sequences where non-diagonal elements are in decreasing order. Let
\[c_{n,m,k} = 
\begin{cases} 
0 & \text{ if } n < 0 \text{ or } k < 0 \\
1 & \text{ if } n = k = 0 \\
c_{n - 1, m, l - n + 1} + \sum_{i = 0}^{\min(n - 2, m - 1)} c_{n - 1, i, k - i} & \text{ otherwise }
\end{cases}\]
Then the total number of permutations with $k$ inversions (including decomposable ones) avoiding $123$ can be calculated as $\sum_{n = 0}^{k+1} c_{n, n, k}$.

To obtain the number of such permutations that are indecomposable, subtract the $k$-th coefficient of $\p{\sum_{i \geq 0} x^{i(i+1)/2}}^2$.
\end{theorem}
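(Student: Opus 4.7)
The plan is to translate $123$-avoidance into a condition on the inversion table, set up the recurrence by a left-to-right pass with an auxiliary upper-bound parameter, and then extract $|I_k(123)|$ by subtracting the contribution of the (easily classified) decomposable $123$-avoiders.

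First I would characterize the inversion tables of $123$-avoiding permutations. The entry $b_i$ equals $n - i$ (i.e., is diagonal) precisely when $\pi_i$ is a right-to-left maximum. If $\pi$ avoids $123$ then any two non-right-to-left maxima $\pi_i$, $\pi_j$ with $i < j$ must satisfy $\pi_i > \pi_j$, since an ascent $\pi_i < \pi_j$ together with a witness of the non-right-to-left-maximum status of $\pi_j$ forms a $123$; a direct comparison of $b_i$ and $b_j$ then upgrades this to $b_i > b_j$ at non-diagonal positions. For the converse I would argue by contradiction, choosing among pairs $i < j$ of non-right-to-left maxima with $\pi_i < \pi_j$ the one with $j - i$ minimal; the strict inequality $b_i > b_j$ forces some $\ell$ with $i < \ell < j$ and $\pi_\ell < \pi_i$, and such an $\ell$ either violates minimality (if it is non-right-to-left max, since $(\ell, j)$ is then a closer ascent-pair) or contradicts the definition of right-to-left maximum (in which case we would need $\pi_\ell > \pi_j$). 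Combined with the standard inversion-table bijection and the equivalence of indecomposability for a permutation and its inversion table (as in the proof of Theorem~\ref{partitions}), this gives the first claim.

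Next I would interpret $c_{n, m, k}$ as counting subdiagonal sequences of length $n$ with sum $k$, non-diagonal entries strictly decreasing, and first non-diagonal entry strictly less than $m$. The recurrence follows by examining the leftmost entry: the diagonal choice forces the value $n - 1$, reducing to $c_{n-1, m, k-n+1}$ with the bound $m$ untouched since no non-diagonal entry has yet been placed; a non-diagonal value $i$ requires $i \leq n - 2$ from subdiagonality and $i \leq m - 1$ from the current bound, after which the next non-diagonal entry must be at most $i - 1$, producing the summand $c_{n-1, i, k-i}$. Starting from $m = n$ imposes no constraint beyond subdiagonality, so $\sum_n c_{n, n, k}$ enumerates all $123$-avoiding permutations with $k$ inversions, decomposable ones included.

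Finally, for the indecomposability correction I would classify decomposable $123$-avoiders. Such a permutation has at most two components, since three or more components would force a $123$ by picking one element from each of three distinct components (their values shift strictly upward). A two-component decomposition $\alpha \oplus \beta$ with non-empty $\alpha, \beta$ avoids $123$ only if neither factor has an ascent, so both are strictly decreasing. The decreasing permutation of size $i + 1$ contributes $\binom{i+1}{2} = i(i+1)/2$ inversions, so the generating function for a single decreasing factor by inversions is $\sum_{i \geq 0} x^{i(i+1)/2}$, and squaring it gives the generating function for ordered pairs and hence for decomposable $123$-avoiders by inversion count; its $k$-th coefficient is exactly what must be subtracted from the total. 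The main obstacle is the first step: the converse direction of the inversion-table characterization needs the minimality argument above to rule out $123$ patterns hiding between two non-diagonal positions, whereas the recurrence and the decomposable-count classification amount to straightforward book-keeping once the characterization is in hand.
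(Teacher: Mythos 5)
Your proposal follows the paper's proof in all three steps: the same characterisation of $123$-avoiders through right-to-left maxima (equivalently, the non-diagonal entries of the inversion table being strictly decreasing), the same reading of $c_{n,m,k}$ as counting subdiagonal sequences of length $n$ and sum $k$ whose non-diagonal entries decrease and are less than $m$, with the recurrence obtained by conditioning on the first entry, and the same classification of decomposable $123$-avoiders as sums of two non-empty decreasing permutations, giving the subtracted generating function $\p{\sum_{i\ge 0}x^{i(i+1)/2}}^2$. Your converse argument in the characterisation (minimising $j-i$ over ascending pairs of non-right-to-left maxima) differs in detail from the paper's, which works with consecutive non-diagonal indices, but it is sound; just add the one-line remark that in any occurrence of $123$ the first two entries are non-right-to-left maxima, so forbidding such ascending pairs really does forbid the pattern.

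The one genuine omission is the range of summation: you assert that ``$\sum_n c_{n,n,k}$'' counts all $123$-avoiders with $k$ inversions but never explain why the sum terminates, whereas the statement commits to the specific limit $n\le k+1$. The paper handles this with Lemma~\ref{invcomp}: a $123$-avoiding permutation has at most two components (one element from each of three components would form a $123$), so $k=\operatorname{inv}(\pi)\ge n-2$, i.e.\ $n\le k+2$. Note that this argument yields $k+2$, not $k+1$, and the term $n=k+2$ can be nonzero: for $k=1$ the permutations $132$ and $213$ have size $3$, and for $k=2$ the permutation $2143$ has size $4$, so with the limit $k+1$ the claimed total is wrong (for $k=1$ the final subtraction would even give $-1$). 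Hence when you supply the missing bounding step you should bound the sum at $k+2$; the limit printed in the statement, and repeated in the paper's own proof, appears to be off by one. Apart from this bounding step your argument is essentially the paper's.
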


\begin{proof}
First we consider decomposable permutations avoiding $123$. Let $\pi = \tau \oplus \sigma$ be such a permutation. Then $\tau_{\abs{\tau}} \sigma_1$ is an ascent, so neither $\tau$ nor $\sigma$ can contain any other ascents, and are thus decreasing. We also see that the sum of any two decreasing permutations is a decomposable permutation avoiding $123$, so this is a complete description of such permutations. The generating function for decreasing permutations enumerated by inversions is $\sum_{i \geq 0} x^{i(i+1)/2}$ so the composition of two decreasing permutations is enumerated by $\p{\sum_{i \geq 0} x^{i(i+1)/2}}^2$.

We now consider permutations avoiding $123$, including decomposable ones, and prove they are in bijection with subdiagonal sequences where non-diagonal elements are in decreasing order. Let $\pi = \pi_1 \dots \pi_n$ be a permutation with right-to-left maxima at all indices except $i_1, \dots, i_r$. By the same argument as above the values $\pi_{i_1}, \dots, \pi_{i_r}$ are decreasing. Let $\rho = (\rho_1, \dots, \rho_n)$ be the inversion table of $\pi$. Since a right-to-left maximum is greater than all the values that come afterwards, it corresponds to a diagonal value in $\rho$, so $i_1, \dots, i_r$ are the non-diagonal indices of $\rho$. Because $i_{j+1} > i_j$ and $\pi_{i_{j+1}} > \pi_{i_j}$ we get $\rho_{j_{i+1}} > \rho_{j_i}$. Thus the non-diagonal elements of the inversion table are in decreasing order.

For the other direction, let $\rho = (\rho_1, \dots, \rho_n)$ be a subdiagonal sequence with non-diagonal elements in decreasing order and let $\pi$ be the permutation corresponding to $\rho$ when it is considered as an inversion table. We consider the indices of the non-diagonal elements $j_1, \dots, j_s$. The other elements, the right-to-left maxima, are in decreasing order, so it suffices to prove that $\pi_{j_1}, \dots, \pi_{j_s}$ are also in decreasing order, since any occurrence of $123$ would have to involve an ascent among these elements. By assumption we have $\rho_{j_k} > \rho_{j_{k+1}}$ and we also note that any elements between $\pi_{j_k}$ and $\pi_{j_{k+1}}$ are right-to-left maxima. Since $\pi_{j_k}$ is not a right-to-left maximum, it is smaller than all of the elements between $\pi_{j_k}$ and $\pi_{j_{k+1}}$. Thus in order for $\rho_{j_k} > \rho_{j_{k+1}}$ to hold we must have $\pi_{j_k} > \pi_{j_{k+1}}$, so the sequence is decreasing.

All that is left to prove is the formula involving $c_{n,m,k}$. We see that $c_{n,m,k}$ enumerates subdiagonal sequences of $n$ elements with sum $k$ where the non-diagonal elements are decreasing and less than $m$. The three cases $n < 0, k < 0$ and $n = k = 0$ are straight-forward. The last case is covered by considering what the first element of our subdiagonal-sequence is, where $c_{n-1,m,l-n+1}$ is the case where we put a diagonal element first and the sum is when we put a non-diagonal value $i$ in front. Since all elements of a subdiagonal sequence of $n$ elements are less than $n$, we have that $c_{n,n,k}$ counts subdiagonal sequences of $n$ elements with sum $k$ where the non-diagonal elements are decreasing. Thus by Lemma~\ref{invcomp} we only need to sum $n$ from $0$ to $k + 1$ to get our answer.
\end{proof} 

\section{Several patterns}

We now investigate permutations avoiding several patterns. Some groups of patterns are restrictive enough to make all supersets of those patterns trivially determined. For example $|I_k(123, 321)|$ quickly decays to zero by the Erd\H{o}s-Szekeres Theorem~\cite{erds}, making all supersets of $\cp{123, 321}$ easy to determine. We have two more such trivial pairs of patterns.

\begin{theorem}
$|I_k(231, 321)| = 1$ and the unique indecomposable permutation with $k$ inversions avoiding these patterns is $(k+1) 1 2 \dots k$.
\end{theorem}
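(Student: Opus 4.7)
The plan is to first verify that $(k+1)\,1\,2\cdots k$ is an element of $I_k(231,321)$, and then show it is the only one by locating the maximum element.

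For the verification: the permutation $\pi = (k+1)\,1\,2\cdots k$ is indecomposable (its last element is not $n = k+1$, and its initial segments of length $j$ contain $k+1$ so cannot equal $\{1,\dots,j\}$), and its only inversions are the pairs $(1, j)$ for $j = 2, \dots, k+1$, giving exactly $k$ inversions. Any occurrence of $321$ or $231$ would need a decrease somewhere after position $1$, but $\pi_2 < \pi_3 < \cdots < \pi_{k+1}$.

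For uniqueness, let $\pi \in I_k(231,321)$ have length $n$ and let $i$ be the position of the maximum value $n$. I would first argue $i = 1$. Suppose $i > 1$ and pick any $j < i$ and $\ell > i$. The triple $(\pi_j, \pi_i, \pi_\ell) = (\pi_j, n, \pi_\ell)$ has $n$ as its largest entry; for it not to be an occurrence of $231$ we must have $\pi_\ell > \pi_j$. Since this holds for every such pair of indices, every element after position $i$ exceeds every element before it, which forces $\{\pi_1,\dots,\pi_{i-1}\} = \{1,\dots,i-1\}$. But then $\pi$ decomposes as $(\pi_1\cdots\pi_{i-1}) \oplus (\pi_i\cdots\pi_n)$, contradicting indecomposability. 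Hence $i = 1$ and $\pi_1 = n$.

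With $\pi_1 = n$, any descent $\pi_j > \pi_\ell$ for $1 < j < \ell$ together with $\pi_1 = n$ would produce an occurrence of $321$. So $\pi_2 < \pi_3 < \cdots < \pi_n$, and since these entries are a permutation of $\{1,\dots,n-1\}$ we get $\pi = n\,1\,2\cdots(n-1)$. Counting inversions gives $n-1 = k$, so $n = k+1$ and $\pi = (k+1)\,1\,2\cdots k$ is the unique element.

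The main obstacle is the step where $231$-avoidance is used to force the decomposition when $i > 1$; the key observation there is that $n$ being placed at position $i$ means the comparison in a would-be $231$ pattern reduces purely to comparing elements before $i$ with elements after $i$, and avoiding that pattern cleanly separates the value set into a low block and a high block.
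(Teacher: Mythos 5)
Your proof is correct and follows essentially the same route as the paper: show that indecomposability together with $231$-avoidance forces the maximum value to occupy the first position, then use $321$-avoidance to force the remaining entries to be increasing, leaving only $(k+1)\,1\,2\cdots k$. The only difference is minor: the paper derives $\pi_1 = n$ by first invoking the left-to-right-maxima/short-value structure coming from $321$-avoidance and then applying $231$-avoidance to an inversion, whereas you argue directly from $231$-avoidance on triples through the maximum, which is slightly more self-contained but not a genuinely different method.
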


\begin{proof}
Let $\pi \in I_k(321, 231)$. Since $\pi = \pi_1 \dots \pi_n$ avoids $321$ it is composed of two increasing sequences, the left-to-right maxima and the short values. Suppose $\pi_1 \neq n$. This means we have some inversion $(i, j)$ of $\pi$ where $i \neq 1$. Thus $\pi_i$ is a left-to-right maximum and $\pi_j$ is short. Now we have $\pi_j > \pi_1$ since otherwise $1, i, j$ would be an occurrence of $231$. Therefore $\pi$ is decomposable into the permutation consisting of the first $\pi_1$ values and then the rest. This cannot be, so the assumption $\pi_1 \neq n$ is false, i.e. $\pi_1 = n$. But then since $\pi$ avoids $321$ there can be no inversion after $\pi_1$. Thus $\pi$ is exactly $n 1 2 3 \dots (n-1)$. We also see that this avoids $321$ and $231$, completing the proof.
\end{proof}

\begin{theorem}
$|I_k(231, 312)| = |I_k(12)|$.
\end{theorem}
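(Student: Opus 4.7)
The plan is to show that the only indecomposable permutation avoiding both $231$ and $312$ is the reverse identity permutation $n(n-1)\dots 1$, which has exactly $\binom{n}{2}$ inversions. Since $|I_k(12)|$ is the characteristic function of the triangular numbers (supported on the unique decreasing permutation of the appropriate length), both sides agree as functions of $k$.

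First I would analyze the position of the maximum element. Let $\pi = \pi_1 \dots \pi_n \in I_k(231, 312)$ and let $j$ be the position with $\pi_j = n$. Using $231$-avoidance with $n$ in the role of the ``$3$'', I would deduce that for every $i < j < m$ we must have $\pi_i < \pi_m$; otherwise $(\pi_i, n, \pi_m)$ is a $231$-pattern. This forces every element before position $j$ to be strictly smaller than every element after position $j$, so $\{\pi_1, \dots, \pi_{j-1}\} = \{1, \dots, j-1\}$. But then $\pi$ splits as $(\pi_1 \dots \pi_{j-1}) \oplus (n,\pi_{j+1},\dots,\pi_n)$, contradicting indecomposability unless $j = 1$. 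Hence $\pi_1 = n$.

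Next I would apply $312$-avoidance using $\pi_1 = n$ as the ``$3$''. Any ascent $\pi_a < \pi_b$ with $1 < a < b$ would give a $312$-pattern $(n, \pi_a, \pi_b)$, so the suffix $\pi_2 \dots \pi_n$ must be strictly decreasing. Combined with $\pi_1 = n$, we get $\pi = n(n-1)\dots 1$, which contributes $\binom{n}{2}$ inversions. Conversely, the reverse identity is clearly indecomposable and avoids both $231$ and $312$, so for each $n \geq 1$ it is the unique element of $I_{\binom{n}{2}}(231, 312)$, while $I_k(231,312)$ is empty for $k$ not of this form. This matches $|I_k(12)|$ exactly.

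The argument is essentially routine once framed correctly; the main point of care will be the first step, where indecomposability needs to be invoked precisely to rule out $j > 1$ after the $231$-analysis forces the initial block to be an interval $\{1,\dots,j-1\}$.
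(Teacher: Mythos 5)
Your proof is correct and takes essentially the same route as the paper: both arguments locate the maximum element, use $231$- and $312$-avoidance together with indecomposability to force $\pi$ to be the decreasing permutation $n(n-1)\dots 1$, and then identify $|I_k(231,312)|$ with the characteristic function of the triangular numbers. The only difference is the order in which the two avoidance conditions are applied (you force $\pi_1 = n$ first via $231$, the paper first gets the decreasing tail via $312$), which is immaterial.
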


\begin{proof}
Consider the maximum element $\pi_m$ of $\pi \in I_k(231, 312)$. Since $\pi = \pi_1 \dots \pi_n$ is indecomposable, $\pi_m$ cannot be the last element. Now any element $\pi_i$ after $\pi_{m+1}$ must be smaller than $\pi_{m+1}$ since otherwise $m, m+1, i$ would be an occurrence of $312$. Thus, the values after $\pi_m$ must be in decreasing order. Suppose $\pi_i$ comes before $\pi_m$, then it must be smaller than $\pi_n$ since otherwise $i, m, n$ would be an occurrence of $231$. But this means all values before $\pi_m$ are smaller than the decreasing sequence $\pi_m, \dots, \pi_n$, which contradicts the indecomposability of $\pi$. Thus there is no such $\pi_i$ and $\pi$ must be a decreasing sequence, which avoids $12$. We also see that any decreasing sequence avoids $231$ and $312$, completing the proof.
\end{proof}

All supersets of $\{231, 321\}$ and $\{231, 312\}$ are now trivial to determine, as they all equal to one of the two cases above or decay to zero. 

Up to symmetries the only pairs of patterns left to investigate are $\cp{123, 231}$ and pairs containing $132$.

\begin{theorem}\label{fountainsum}
$|I_k(123, 231)|$ enumerates fountains of $k$ coins where the missing coins with respect to a full triangular fountain form a rectangle (removing no coins counts as a rectangle). The generating function is given by 
\[\sum_{i \geq 1} x^{\binom{i}{2}} + \sum_{i \geq 1} \sum_{j \geq 1} \sum_{\ell = 0}^{\min(i, j) - 1} x^{\binom{i + 1}{2} + \binom{j + 1}{2} - \binom{\ell + 1}{2}}\]
\end{theorem}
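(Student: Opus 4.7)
My plan is to leverage Theorem~\ref{ik231} by incorporating the $123$-avoidance. Let $\pi \in I_k(123, 231)$ and let $\pi'$ be its reverse, which has $\binom{n}{2} - k$ inversions, avoids both $132$ and $321$, and is skew-indecomposable if and only if $\pi$ is indecomposable. By Lemma~\ref{inv132} the inversion table $\rho'$ of $\pi'$ is weakly decreasing.

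The key structural step is to show that $321$-avoidance forces $\rho'$ to take a rectangular shape $(m, m, \dots, m, 0, 0, \dots, 0)$ --- all positive entries equal, followed by zeros. Suppose for contradiction $\rho_i' > \rho_{i+1}' > 0$. If $\pi_i' > \pi_{i+1}'$, then any position $j > i + 1$ with $\pi_j' < \pi_{i+1}'$ (which exists since $\rho_{i+1}' \geq 1$) yields a $321$-pattern $\pi_i' \pi_{i+1}' \pi_j'$. If instead $\pi_i' < \pi_{i+1}'$, then every position counted by $\rho_i'$ lies past $i + 1$ with value less than $\pi_i' < \pi_{i+1}'$, hence is also counted by $\rho_{i+1}'$, giving $\rho_i' \leq \rho_{i+1}'$ and contradicting the strict inequality. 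Combined with the weak decrease this forces the rectangular form, and the converse that any rectangle inversion table corresponds to a $132, 321$-avoider follows from reversing the argument.

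Writing $\rho'$ as $p$ copies of $m$ followed by $n - p$ zeros, the skew-indecomposability condition from Theorem~\ref{ik231} (namely $\rho_i' < n - i$ for all $i < n$) reduces to $m < n - p$, i.e.\ $n \geq m + p + 1$, with the case $m = 0$ being the degenerate full-fountain case. Applying the bijection of Theorem~\ref{ik231} then removes the topmost $m$ coins from each of the leftmost $p$ diagonals, producing exactly a rectangle of missing coins; the total coin count is $k = \binom{n}{2} - mp$. This establishes the bijection between $I_k(123, 231)$ and fountains of $k$ coins with a rectangular missing region.

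For the generating function, I sum $x^k$ over valid $(m, p, n)$. The full-fountain case $m = 0$ contributes $\sum_{n \geq 1} x^{\binom{n}{2}}$, matching the first sum. For $m, p \geq 1$ and $n \geq m + p + 1$, set $\ell = n - m - p - 1 \geq 0$ and $(i, j) = (m + \ell, p + \ell)$; a direct expansion shows $\binom{n}{2} - mp = \binom{i+1}{2} + \binom{j+1}{2} - \binom{\ell+1}{2}$, and the conditions $m, p \geq 1$ translate to $0 \leq \ell \leq \min(i, j) - 1$, producing the second triple sum. The main obstacle is the rectangular-shape argument for $\rho'$, since it requires simultaneously invoking $132$- and $321$-avoidance and carefully handling both orderings of $\pi_i'$ and $\pi_{i+1}'$.
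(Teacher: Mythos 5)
Your proof is correct and follows essentially the same route as the paper: reverse the permutation, use Lemma~\ref{inv132} together with $321$-avoidance to force the inversion table to be a constant positive block followed by zeroes, and then invoke the correspondence and skew-indecomposability condition from Theorem~\ref{ik231} to get the rectangular missing region. The only cosmetic differences are that you establish the constant-block claim by a direct case analysis rather than citing the inversion-table characterizations for $321$-avoiders, and you derive the generating function via the change of variables $(i,j,\ell)=(m+\ell,\,p+\ell,\,n-m-p-1)$ instead of the paper's geometric two-peaks-with-overlap decomposition, which amounts to the same parametrization.
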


\begin{proof}
Let $\pi \in I_k(123, 231)$. Like in the proof of Theorem~\ref{ik231} we can reverse $\pi$ to get $\pi'$ which corresponds to a partition $\rho$. This means $\pi'$ avoids $321$, so by the results for $I_k(123)$ we know that the non-zero entries in the inversion table of $\pi'$ are in weakly increasing order. $\pi'$ also avoids $132$ so the inversion table of $\pi'$ is weakly decreasing as well by Lemma~\ref{inv132}. But therefore, aside from trailing zeroes, the entries of $\rho$ are weakly increasing and weakly decreasing, and are thus fixed. Thus the rows where coins are removed are contiguous and the coins removed in those diagonals is fixed, forming a removed rectangle. For the other direction we see that if anything is removed, it has to include the top coin, so $\rho$ is fixed aside from trailing zeroes. Therefore $\pi'$ avoids $132$ by Lemma~\ref{inv132} and the non-zero entries of $\rho$ are weakly increasing, so $\pi'$ avoids 321 as well. This means $\pi$ avoids $123$ and $231$, completing our proof of the first claim. 

We obtain the generating function by first splitting into cases based on whether the fountain is full or not. If we remove nothing, we are left with full fountains which are enumerated by the first sum in our result. If we remove a non-empty rectangle we are left with two peaks which each form a full fountain with $i> 0$ coins and $j > 0$ coins at the base respectively. When these fountains overlap the overlap takes the shape of another full fountain with $\ell$ coins at the base. The overlap cannot contain either fountain so $\ell < i, j$. Under these constraints any choice of $i, j, \ell$ will give a valid fountain corresponding to an element in $I_k(123, 231)$ with $\binom{i + 1}{2} + \binom{j + 1}{2} - \binom{\ell + 1}{2}$ coins.
\end{proof}

\begin{figure}
\centering
\begin{tikzpicture}
\node[circle,draw,minimum size=1cm,pattern=vertical lines, pattern color=blue](r1c1) at (0,0) {};
\node[circle,draw,minimum size=1cm,pattern=vertical lines, pattern color=blue](r2c1) at (60:1cm) {};
\node[circle,draw,minimum size=1cm,pattern=vertical lines, pattern color=blue](r3c1) at (60:2cm) {};
\node[circle,draw,minimum size=1cm,pattern=vertical lines, pattern color=blue](r4c1) at (60:3cm) {};
\node[circle,draw,minimum size=1cm,pattern=vertical lines, pattern color=blue](r5c1) at (60:4cm) {};
\node[circle,draw,minimum size=1cm, red, dashed, thick](r6c1) at (60:5cm) {};
\node[circle,draw,minimum size=1cm, red, dashed, thick](r7c1) at (60:6cm) {};

\node[circle,draw,minimum size=1cm,pattern=vertical lines, pattern color=blue](r1c2) at ($(r1c1) + (0:1cm)$) {};
\node[circle,draw,minimum size=1cm,pattern=vertical lines, pattern color=blue](r2c2) at ($(r2c1) + (0:1cm)$) {};
\node[circle,draw,minimum size=1cm,pattern=vertical lines, pattern color=blue](r3c2) at ($(r3c1) + (0:1cm)$) {};
\node[circle,draw,minimum size=1cm,pattern=vertical lines, pattern color=blue](r4c2) at ($(r4c1) + (0:1cm)$) {};
\node[circle,draw,minimum size=1cm, red, dashed, thick](r5c2) at ($(r5c1) + (0:1cm)$) {};
\node[circle,draw,minimum size=1cm, red, dashed, thick](r6c2) at ($(r6c1) + (0:1cm)$) {};

\node[circle,draw,minimum size=1cm,pattern=vertical lines, pattern color=blue](r1c3) at ($(r1c2) + (0:1cm)$) {};
\node[circle,draw,minimum size=1cm,pattern=vertical lines, pattern color=blue](r2c3) at ($(r2c2) + (0:1cm)$) {};
\node[circle,draw,minimum size=1cm,pattern=vertical lines, pattern color=blue](r3c3) at ($(r3c2) + (0:1cm)$) {};
\node[circle,draw,minimum size=1cm, red, dashed, thick](r4c3) at ($(r4c2) + (0:1cm)$) {};
\node[circle,draw,minimum size=1cm, red, dashed, thick](r5c3) at ($(r5c2) + (0:1cm)$) {};

\node[circle,draw,minimum size=1cm,pattern=vertical lines, pattern color=blue](r1c4) at ($(r1c3) + (0:1cm)$) {};
\node[circle,draw,minimum size=1cm,pattern=horizontal lines, pattern color=green](r1c4) at ($(r1c3) + (0:1cm)$) {};
\node[circle,draw,minimum size=1cm,pattern=vertical lines, pattern color=blue](r2c4) at ($(r2c3) + (0:1cm)$) {};
\node[circle,draw,minimum size=1cm,pattern=horizontal lines, pattern color=green](r2c4) at ($(r2c3) + (0:1cm)$) {};
\node[circle,draw,minimum size=1cm,pattern=horizontal lines, pattern color=green](r3c4) at ($(r3c3) + (0:1cm)$) {};
\node[circle,draw,minimum size=1cm,pattern=horizontal lines, pattern color=green](r4c4) at ($(r4c3) + (0:1cm)$) {};

\node[circle,draw,minimum size=1cm,pattern=vertical lines, pattern color=blue](r1c5) at ($(r1c4) + (0:1cm)$) {};
\node[circle,draw,minimum size=1cm,pattern=horizontal lines, pattern color=green](r1c5) at ($(r1c4) + (0:1cm)$) {};
\node[circle,draw,minimum size=1cm,pattern=horizontal lines, pattern color=green](r2c5) at ($(r2c4) + (0:1cm)$) {};
\node[circle,draw,minimum size=1cm,pattern=horizontal lines, pattern color=green](r3c5) at ($(r3c4) + (0:1cm)$) {};

\node[circle,draw,minimum size=1cm,pattern=horizontal lines, pattern color=green](r1c6) at ($(r1c5) + (0:1cm)$) {};
\node[circle,draw,minimum size=1cm,pattern=horizontal lines, pattern color=green](r2c6) at ($(r2c5) + (0:1cm)$) {};

\node[circle,draw,minimum size=1cm,pattern=horizontal lines, pattern color=green](r1c7) at ($(r1c6) + (0:1cm)$) {};
\end{tikzpicture}
\label{triangles}
\caption{Example of decomposition into smaller fountains.}
\end{figure}

This leaves us with four pairs, pairing $132$ with any of the patterns $123, 213, 231$ and $321$. We now tackle them in that order. To deal with the first of them we have to introduce a new kind of partition.

\emph{Almost triangular} partitions of $k$ are partitions $\rho = (\rho_1, \dots, \rho_r)$ of $k$ such that each $\rho_i$ is either $i$ or $i - 1$, allowing $\rho_1 = 0$ for convenience. We note however that $0+1+\dots+r$ and $1+\dots+r$ are the same partition, so since we allow $\rho_1 = 0$ we must forbid $\rho_i = i - 1$ being true for all $i$ to avoid duplicates.

\begin{theorem}\label{almosttriangular}
$|I_k(132, 123)|$ enumerates the Pascal triangle with the first column removed, the corresponding sequence is entry \texttt{A135278} in the OEIS. Its generating function is 
\[\sum_{n \geq 1} x^{(n-2)(n+1)/2} \p{(x + 1)^n - 1}\].
\end{theorem}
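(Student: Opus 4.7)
The plan is to identify $I_k(132, 123)$ bijectively with the almost triangular partitions of $k$, then enumerate those partitions combinatorially to obtain both the Pascal-triangle count and the claimed generating function.

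I would begin by combining Lemma~\ref{inv132} with the characterization of $123$-avoidance from the proof of Theorem~\ref{123rec}: for $\pi \in I_k(132, 123)$ of length $n$, the inversion table $b = (b_1, \ldots, b_n)$ is weakly decreasing and its non-diagonal entries (those $b_i$ with $b_i < n - i$) are strictly decreasing in the order of their positions. The technical heart of the proof is the structural claim that these two conditions force $b_i \in \{n - i - 1, n - i\}$ for every $i < n$, with $b_n = 0$. I would establish this by reverse induction on $i$: if $b_{i+1}$ is diagonal then $b_{i+1} = n - i - 1$ and weak decrease gives $b_i \geq n - i - 1$; if instead $b_{i+1}$ is non-diagonal, the inductive hypothesis gives $b_{i+1} = n - i - 2$ and strict decrease of non-diagonals again gives $b_i \geq n - i - 1$. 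Either way a non-diagonal $b_i \leq n - i - 1$ is pinned to $n - i - 1$.

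With the structural claim in hand, the bijection is $\pi \mapsto \rho$ defined by $\rho_j \coloneqq b_{n - j}$ for $j = 1, \ldots, n - 1$. Each $\rho_j \in \{j - 1, j\}$ and $\sum_j \rho_j = k$ since $b_n = 0$, so $\rho$ is almost triangular. The permutation $\pi$ is indecomposable precisely when some $b_i$ with $i < n$ is diagonal; otherwise the inversion table is $(n - 2, n - 3, \ldots, 1, 0, 0)$, corresponding to the decomposable permutation $(n-1)(n-2)\cdots 1 \oplus 1$. This condition matches exactly the exclusion of the forbidden partition with $\rho_i = i - 1$ for all $i$ in the definition of almost triangular partitions. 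The inverse map reads an almost triangular partition $(\rho_1, \ldots, \rho_r)$ into the inversion table $(\rho_r, \rho_{r-1}, \ldots, \rho_1, 0)$ of length $r + 1$, and the verification that the resulting permutation lies in $I_k(132, 123)$ uses the same case analysis.

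Counting almost triangular partitions is then bookkeeping. A length-$r$ almost triangular partition is specified by a vector $\epsilon \in \{0, 1\}^r$ via $\rho_i = i - 1 + \epsilon_i$, with sum $\binom{r}{2} + \sum_i \epsilon_i$. Excluding the forbidden $\epsilon = 0$, the length-$r$ contribution to the generating function is $x^{\binom{r}{2}} \p{(1 + x)^r - 1}$, and summing over $r \geq 1$ (after reindexing to match the statement) produces the stated formula. The coefficient of $x^k$ expands as a sum of binomials $\binom{r}{m}$ with $1 \leq m \leq r$, which are exactly the entries of Pascal's triangle with the first column removed, matching \texttt{A135278}. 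The main obstacle is the structural claim: the two conditions on the inversion table look flexible a priori, but their interaction pins every entry to one of two values, and once this rigidity is established the remainder is a direct combinatorial expansion.
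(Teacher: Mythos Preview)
Your argument is correct and reaches the same conclusion as the paper, but you arrive at the key structural claim $b_i\in\{n-i-1,\,n-i\}$ by a different route. The paper dispatches this in two lines: if two values after $\pi_i$ were at least $\pi_i$, they would form either a $132$ or a $123$ with $\pi_i$; hence at most one later value exceeds $\pi_i$, so $b_i\ge n-i-1$. You instead invoke the two separate inversion-table characterizations (weakly decreasing from Lemma~\ref{inv132}, non-diagonals strictly decreasing from the proof of Theorem~\ref{123rec}) and glue them together with a reverse induction. Your route is more modular and reuses earlier machinery; the paper's is shorter and self-contained. Note that your induction genuinely needs the \emph{strict} decrease of adjacent non-diagonal entries, which is implicit in the proof of Theorem~\ref{123rec} (since $\pi_i>\pi_{i+1}$ forces $b_i>b_{i+1}$) but not stated outright there; it would be worth making this explicit.

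Your handling of indecomposability also differs slightly: the paper leans on the bijection of Theorem~\ref{partitions} (so indecomposability is already built in, and the exclusion of the all-$(i-1)$ partition is framed as avoiding a double count), whereas you identify the excluded partition directly with the unique decomposable inversion table $(n-2,n-3,\dots,1,0,0)$. These two viewpoints are equivalent, and yours makes the role of indecomposability more transparent. The enumeration and generating-function derivation are essentially identical to the paper's; your $x^{\binom{r}{2}}((1+x)^r-1)$ per length $r$ is exactly the paper's row-by-row contribution.
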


\begin{proof}
We first show that almost triangular partitions of $k$ are enumerated by the Pascal triangle with the first column removed. Suppose we have an almost triangular partition $\rho = (\rho_1, \dots, \rho_r)$ of $k$. The right hand side is at most $1 + \dots + r = r(r+1)/2$ and at least $0 + 1 + \dots + (r - 1) = r(r-1)/2$.  Therefore the sum is at least $r(r-1)/2+1$. This means that for any given $k$, the $r$ is uniquely determined as the index of the next triangular number after or equal to $k$. To create an almost triangular partition we can then start with the least possible value for each of the $r$ summands, then choose $k - r(r - 1)/2$ of them to be increased by one. Since we must always increase at least one summand there will be $\binom{r}{k - r(r-1)/2}$ ways to do this, which walks through the Pascal triangle row by row without the first column as $k$ increases.

Now we show that $I_k(132, 123)$ corresponds bijectively to almost triangular partitions. Let $\pi \in I_k(132, 123)$. For some index $i$ suppose at least two values after $\pi_i$ are not less than $\pi_i$, say $\pi_j, \pi_k$ with $j < k$. Then either $\pi_j > \pi_k$ and $i, j, k$ is an occurrence of $132$ or $\pi_j < \pi_k$ and $i, j, k$ is an occurrence of $123$. Thus each $\pi_i$ is greater than all of the values after it or all but one, proving that $\rho$ is almost triangular. For the other direction we now assume we have $\pi \in I_k(132)$ such that $\rho$ is almost triangular. Then each value in $\pi$ must be greater than every value after it but one. Thus we can have no occurrence of $123$ since that requires a value to have two greater values after it. 

A row of the Pascal triangle has generating function $(x + 1)^n$, so to delete the first column we simply subtract $1$. Simple counting gets us that this row will start at $(n-2)(n+1)/2$, so by simply shifting over each row the appropriate amount and summing them together shows that this generating function corresponds to our description.
\end{proof}

Our next result involves a kind of partition called a Gorenstein partition. \emph{Gorenstein partitions} are partitions whose corresponding Schubert variety has a Gorenstein homogeneous coordinate ring, see Svanes~\cite{svanes} and Stanley~\cite{stanley}. The terminology appears to be due to Sloane~\cite{oeis} under A117629 however. By Theorem 5.4 in Stanley's work~\cite{stanley} the definition is equivalent to partitions whose maximal chains are all of the same size when regarded as order ideals of $\{1, 2, \dots\} \times \{1, 2, \dots\}$ under the product order.  See Figure~\ref{hasse} which shows a combined Hasse and Ferrers diagram for such a partition and the corresponding partial order. The figure is rotated by $45^{\circ}$ compared to a normal Ferrers diagram to conform to Hasse diagram conventions. The maximal chains all have size seven in this example, and a couple of them have been highlighted in the figure. This definition is rather unwieldy for our purposes however, so we first translate this condition.

\begin{figure}
\centering
\begin{tikzpicture}
\node (P) at (1.5, 0)  {\large $24 = 7 + 4 + 4 + 4 + 2 + 2 + 1$};
\begin{scope}[rotate=-45,shift={(2,0)}]
\foreach \x/\y in {0/0,1/0,2/0,3/0,4/0,5/0,6/0,0/-1,1/-1,2/-1,3/-1,0/-2,1/-2,2/-2,3/-2,0/-3,1/-3,2/-3,3/-3,0/-4,1/-4,0/-5,1/-5,0/-6} {
	\draw[fill=lightgray] (\x,\y) circle (0.25);
	\ifthenelse{\x=0}{}{\draw[thick] (\x-0.25, \y) -- (\x - 0.75, \y)};
	\ifthenelse{\y=0}{}{\draw[thick] (\x, \y+0.25) -- (\x, \y + 0.75)};
}

\draw[dashed] (-0.5, 0.5) -- (-0.5, -6.5) -- (0.5, -6.5) -- (0.5, 0.5) -- cycle;
\draw[dashed] (-0.5, 0.5) -- (2.5, 0.5) -- (2.5, -2.5) -- (3.5, -2.5) -- (3.5, -3.5) -- (1.5, -3.5) -- (1.5, -0.5) -- (-0.5, -0.5) -- cycle;
\end{scope}
\end{tikzpicture}
\caption{Hasse/Ferrers diagram of a Gorenstein partition.}
\label{hasse}
\end{figure}

\begin{lemma}
\label{gorenlemma}
A partition $\rho$ is Gorenstein if and only if $\rho_i + i$ is constant across indices $i$ that satisfy $\rho_i \neq \rho_{i+1}$. 
\end{lemma}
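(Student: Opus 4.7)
The plan is to translate the Gorenstein condition (``all maximal chains have the same size'') into the claimed arithmetic condition by reading off the corners of the Ferrers diagram. Identify $\rho = (\rho_1, \dots, \rho_r)$ with the order ideal $I_\rho = \{(i,j) : 1 \le i \le r,\ 1 \le j \le \rho_i\}$ inside $\{1,2,\dots\} \times \{1,2,\dots\}$ under the product order. First I would characterise the maximal elements of $I_\rho$: a point $(i,j) \in I_\rho$ is maximal iff neither $(i+1,j)$ nor $(i,j+1)$ lies in $I_\rho$, which (using the convention $\rho_{r+1} = 0$) forces $j = \rho_i$ and $\rho_i > \rho_{i+1}$. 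Because $\rho$ is weakly decreasing, this inequality is just $\rho_i \ne \rho_{i+1}$, so the corner indices of the diagram are exactly the indices appearing in the lemma.

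Next I would compute the size of a maximal chain in $I_\rho$. Every maximal chain starts at the unique minimum $(1,1)$ and ends at some corner $(i, \rho_i)$. Since a cover in $\{1,2,\dots\} \times \{1,2,\dots\}$ increases exactly one coordinate by one, any saturated chain from $(1,1)$ to $(i, \rho_i)$ passes through $i + \rho_i - 1$ elements, so has that size. At least one such chain always exists: go right from $(1,1)$ to $(1, \rho_i)$ and then down to $(i, \rho_i)$, with every intermediate point lying in $I_\rho$ because $\rho$ is weakly decreasing. Hence the set of sizes of maximal chains of $I_\rho$ equals $\{\, i + \rho_i - 1 : \rho_i \ne \rho_{i+1}\,\}$.

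Finally, the Gorenstein condition says this set is a singleton, which is visibly equivalent to $\rho_i + i$ being constant across the corner indices, giving the claimed reformulation. There is no substantial obstacle; the only bookkeeping is adopting the convention $\rho_{r+1} = 0$ so that the last non-zero part always contributes a corner, plus the trivial case $r = 0$ of the empty partition. As a sanity check, the partition in Figure~\ref{hasse} has corner indices $1, 4, 6, 7$ with $\rho_i + i = 8$ in every case, matching the stated chain size of seven.
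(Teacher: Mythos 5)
Your proposal is correct and follows essentially the same route as the paper's proof: both identify the endpoints of maximal chains with the corners $(i,\rho_i)$ where $\rho_i \neq \rho_{i+1}$, compute each maximal chain's size as $\rho_i + i - 1$, and conclude that all maximal chains have equal size exactly when $\rho_i + i$ is constant over those indices. Your write-up is merely a bit more explicit about the poset bookkeeping (unique minimum, saturation, the explicit right-then-down chain), which the paper leaves implicit.
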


\begin{proof}
Let $\rho_1, \dots, \rho_r$ be a partition of $n$. Let us assume $j$ is the maximal index such that $\rho_j = \rho_1$. Since $\rho_j \neq \rho_{j+1}$ the chain from $(1, 1)$ to $(\rho_1, j)$ is maximal since neither the point right of it or below it are in the order ideal corresponding to the partition. Then this maximal chain length must be $\rho_1 + j - 1$. The same argument holds for any $\rho_i$ such that $\rho_i \neq \rho_{i+1}$ and the chain length they give is $\rho_i + i - 1$. However, if $\rho_i = \rho_{i+1}$ then one can always go further, making the chain from $(1, 1)$ to $(\rho_i, i)$ not maximal. Therefore all maximal chains are of the same size if and only if $\rho_i + i$ is constant for all $i$ such that $\rho_i \neq \rho_{i+1}$.
\end{proof}

\begin{theorem}
$|I_k(132, 213)|$ is the number of Gorenstein partitions of $k$. The corresponding sequence is entry \texttt{A117629} in the OEIS.
\end{theorem}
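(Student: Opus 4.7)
The plan is to lean on Theorem~\ref{partitions}: that bijection already identifies $I_k(132)$ with partitions of $k$ via $\pi \mapsto \rho$, so all I need is to carve out, inside the partitions, exactly those coming from $213$-avoiders. Using Lemma~\ref{gorenlemma} as the working definition, I aim to show that $\pi$ avoids $213$ if and only if $\rho_i + i$ is constant on the step-down indices $i$ with $\rho_i \neq \rho_{i+1}$. Throughout I write $n$ for the length of $\pi$ and recall from the proof of Theorem~\ref{partitions} that $n = \max_i(\rho_i + i)$, with the maximum necessarily attained at some step-down (otherwise bumping to the next index would give a strictly larger value).

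First I would prove the forward direction in its contrapositive form. Suppose $\rho$ is not Gorenstein; then by Lemma~\ref{gorenlemma} and the observation above, there is a step-down index $j$ with $\rho_j + j < n$. Reconstructing $\pi$ from its inversion table, $\pi_j$ is the $(\rho_j + 1)$-th smallest among the $n - j + 1$ values not yet used; since $\rho_j + 1 < n - j + 1$, some still-unused value strictly exceeds $\pi_j$. At position $j + 1$ the inequality $\rho_{j+1} < \rho_j$ forces $\pi_{j+1} < \pi_j$, so the leftover large value survives past $j+1$ and lands at some position $k \geq j + 2$. The triple $(j, j+1, k)$ then satisfies $\pi_{j+1} < \pi_j < \pi_k$, a $213$ pattern.

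For the converse I would write $\rho$ in terms of its plateaus: let $v_1 > v_2 > \cdots > v_u \geq 1$ be the distinct parts with multiplicities $\ell_1, \ldots, \ell_u$. The Gorenstein relations $v_i + \ell_1 + \cdots + \ell_i = n$ telescope to $\ell_1 = n - v_1$ and $\ell_{i+1} = v_i - v_{i+1}$. A direct plateau-by-plateau reconstruction (with the conventions $v_0 = n$ and $v_{u+1} = 0$) then shows that $\pi$ splits into $u + 1$ consecutive ascending runs, where the $i$-th run lists the values $v_i + 1, v_i + 2, \ldots, v_{i-1}$. Consequently the runs have disjoint value ranges that strictly decrease from left to right. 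Any triple $a < b < c$ with $\pi_a < \pi_c$ must therefore lie inside a single run, which is increasing, so $\pi_a < \pi_b < \pi_c$ and a $213$ pattern is ruled out.

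Combining the two directions, the Theorem~\ref{partitions} bijection restricts to one between $I_k(132, 213)$ and Gorenstein partitions of $k$. The main obstacle I expect is the bookkeeping in the Gorenstein direction: proving by induction that after completing the first $i$ plateaus the remaining values are exactly $\{1, \ldots, v_i\}$ takes care but is what guarantees that the next plateau begins with $v_{i+1} + 1$ as claimed.
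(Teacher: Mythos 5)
Your proposal is correct, but it proves the equivalence by a genuinely different route than the paper. In the forward direction the paper takes two arbitrary step-down indices $i<j$ of the partition and uses $213$-avoidance to force $\rho_i+i=\rho_j+j$ directly (no indecomposability needed), whereas you first convert Lemma~\ref{gorenlemma} into the sharper statement that, for indecomposable $\pi$ of length $n$, being Gorenstein means every step-down lies on the diagonal $\rho_j+j=n$ (using the fact from Theorem~\ref{partitions} that $\max_i(\rho_i+i)=n$ and is attained at a step-down), and then manufacture a $213$ occurrence $(j,j+1,k)$ from any sub-diagonal step-down; this is a clean contrapositive and your verification steps ($\rho_j>\rho_{j+1}$ forces $\pi_j>\pi_{j+1}$, and the surviving large value sits at some $k\geq j+2$) are all sound. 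In the converse the paper argues by contradiction from a $213$ occurrence chosen with $a$ maximal, reducing to $b=a+1$ and splitting into two cases (the second invoking indecomposability), while you reconstruct the permutation explicitly from the plateau data: the relations $v_i+\ell_1+\cdots+\ell_i=n$ telescope to $\ell_1=n-v_1$, $\ell_{i+1}=v_i-v_{i+1}$, and the resulting $\pi$ is a skew sum of increasing runs whose value blocks strictly decrease, which visibly cannot contain $213$. The bookkeeping you flag does go through; the quickest way to close it is to note that the claimed run permutation has inversion table exactly $\rho$ (each value in run $i$ exceeds precisely the $v_i$ values in later runs), so by uniqueness of inversion tables it is $\pi$. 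Comparing the two: the paper's forward argument is slightly more general (it needs no indecomposability), while your converse is more constructive, avoids the delicate extremal case analysis, and as a byproduct gives an explicit structural description of the permutations in $I_k(132,213)$ as (indecomposable) skew sums of increasing permutations, which the paper's proof does not make visible.
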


\begin{proof}
Let $\pi \in I_k(132)$ and $\rho$ be the partition corresponding to its inversion table as given by Theorem~\ref{partitions}. We wish to show that if $\pi \in I_k(213)$ then $\rho$ is Gorenstein. Suppose we have two indices $i < j$ such that $\rho_i > \rho_{i + 1}$ and $\rho_j > \rho_{j + 1}$. For $\rho_i > \rho_{i + 1}$ to hold we must have $\pi_i > \pi_{i+1}$ and similarly $\pi_j > \pi_{j + 1}$. Then there cannot be any index $i < k < j$ such that $\pi_i < \pi_k$ since then $i, i + 1, k$ would be an occurrence of $213$. Neither can there be an index $k > j$ such that $\pi_j < \pi_k$ since then $j, j + 1, k$ would be an occurrence $213$. Thus any values contributing to the inversion count $\rho_i$ but not $\rho_j$ must occur between the indices $i$ and $j$, and conversely every value there between must contribute to that difference. Thus we arrive at $\rho_i - \rho_j = j - i$ or equivalently $\rho_i + i = \rho_j + j$. Thus $\rho$ is Gorenstein by Lemma~\ref{gorenlemma}.

For the other direction assume $\rho$ is Gorenstein. If $\rho$ contains only one unique non-zero value then $\pi$ must be of the form $k, k + 1, \dots, n, 1, 2, \dots, k - 1$ for some $k, n$. In this case $\pi$ clearly has no occurrence of $213$, so we can assume $\rho$ has at least two different non-zero values. We proceed by contradiction so assume $\pi = \pi_1 \dots \pi_n$ has an occurrence of $213$ at indices $a, b, c$. Without loss of generality we can choose the values $a, b, c$ such that $a$ is maximal among all such choices. Next we show we can assume $b = a + 1$ by showing that if $b > a + 1$ there exists a smaller valid choice of $b$. Suppose $b \neq a + 1$ so there is some $a < k < b$. We cannot have $\pi_k > \pi_c$ since then $a, k, c$ would be an occurrence of $132$. If $\pi_k < \pi_a$ then we can replace $b$ with $k$. The only case left then is $\pi_a < \pi_k < \pi_c$, but in that case we can replace $a$ with $k$ which contradicts the maximality of $a$. Thus we can assume $b = a + 1$ and still have $a$ maximal. Now $\rho_a > \rho_{a+1}$. Since $\rho$ has more than one non-zero value there must be some other index $i$ such that $\rho_i > \rho_{i+1}$, where we define $\rho_j = 0$ for $j > \abs{\rho}$. We now split into two cases.

We consider the case $a < i$ and can assume this $i$ to be minimal among such $i$. This means $\rho_{a+1} = \rho_i$, so $\pi_{a+1}, \dots, \pi_i$ is increasing. Suppose that $\pi_a < \pi_i$. Then the only inversions contributing to $\rho_a$ but not $\rho_i$ are of the form $(a, j)$ with $j < i$. But there can be at most $i - a - 1$ such inversions and there are only $i - a - 1$ indices between $i$ and $a$. But then $\rho_a - \rho_i < i - a$, which contradicts the fact that $\rho$ is Gorenstein by Lemma~\ref{gorenlemma}. Thus we can assume $\pi_a > \pi_i$. Hence the values $\pi_{a+1}, \dots,\pi_i$ are all less than $\pi_a$, so we must have $i < c$. But then $i, i + 1, c$ is an occurrence of $213$ with a larger value of $a$, which gives us a contradiction that completes this case.

Now we assume there is no such $i > a$, so $\rho_a$ is the last non-zero value which makes $\rho_{a+1} = 0$. This means $\pi_{a+1}, \dots, \pi_n$ is increasing, so we can assume $c = n$. Since $\pi$ is indecomposable there is some index $j < a$ such that $\pi_j > \pi_n$. Then $\pi_j > \pi_a$ and $j < a$ so we must have $\rho_a < \rho_j$. We can now let $j$ be the maximum index such that $\pi_j > \pi_n$ and $\rho_j > \rho_a$. Then $\rho_j > \rho_{j+1}$ since otherwise $j + 1$ would be a valid greater index. Now since $j < a$ there is some minimum index $\ell$ such that $\rho_j > \rho_\ell > \rho_{\ell+1}$. Since $\ell$ was not a valid choice for $j$ we must have $\pi_j > \pi_n > \pi_{\ell}$. Since $\ell$ is minimal we have that $\rho_{j+1} = \rho_\ell$, so $\pi_{j+1}, \dots, \pi_\ell$ is increasing. Thus $\pi_j$ is greater than all of these values. Therefore we get that $\rho_j - \rho_{\ell} > \ell - j$ which contradicts the fact that $\rho$ is Gorenstein by Lemma~\ref{gorenlemma}. This completes our proof.
\end{proof}

\begin{theorem}
Let $\mu \vDash s$ denote that $\mu$ is a composition of $s$. $|I_k(132, 213)|$ has generating function 
\[\sum_{s \geq 0} \sum_{\mu \vDash s, \abs{\mu} \neq 1} x^{\binom{s}{2} - \sum_{m \in \mu} \binom{m}{2}}\]
Thus, $|I_k(132, 213)|$ also enumerates finite sequences of positive integers of length $> 1$ such that $k$ equals the second elementary symmetric function of the values of the sequence, as noted in the OEIS entry \texttt{A117629}.
\end{theorem}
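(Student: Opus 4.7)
The plan is to build on the preceding theorem, which already identifies $|I_k(132,213)|$ with the number of Gorenstein partitions of $k$. So it suffices to set up a size-preserving bijection between Gorenstein partitions of $k$ and the compositions appearing in the generating function, and then invoke a standard identity to recognise the exponent as $\binom{s}{2} - \sum_{m \in \mu} \binom{m}{2}$.

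Given a nonempty Gorenstein partition $\rho = (\rho_1, \ldots, \rho_r)$, I record the positions $i_1 < i_2 < \cdots < i_\ell$ at which $\rho_i \neq \rho_{i+1}$ (taking $\rho_{r+1} = 0$, so $i_\ell = r$), together with the common value $s = \rho_{i_j} + i_j$ guaranteed by Lemma~\ref{gorenlemma}. I then set $m_j = i_j - i_{j-1}$ for $j = 1, \ldots, \ell$ (with $i_0 = 0$) and $m_{\ell+1} = s - i_\ell$, the smallest nonzero part of $\rho$. Every $m_j$ is a positive integer and $\sum_j m_j = s$, so $\mu = (m_1, \ldots, m_{\ell+1})$ is a composition of $s$ with $|\mu| = \ell + 1 \geq 2$ parts. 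Conversely, from a composition $\mu = (m_1, \ldots, m_L)$ of $s$ with $L \geq 2$ I rebuild $\rho$ by taking $m_j$ copies of the value $s - (m_1 + \cdots + m_j)$ for each $j = 1, \ldots, L-1$; the condition $m_L \geq 1$ ensures all parts are positive and the construction automatically satisfies the Gorenstein condition of Lemma~\ref{gorenlemma}. The empty partition is matched with the empty composition of $0$, accounting for the $k=0$ term, while excluding length-$1$ compositions $(s)$ prevents them from spuriously duplicating the empty partition.

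For the exponent, the total size of $\rho$ is
\[
\sum_{j=1}^{L-1} m_j \bigl(s - (m_1 + \cdots + m_j)\bigr) \;=\; \sum_{1 \leq j < k \leq L} m_j m_k,
\]
the second elementary symmetric function $e_2(m_1, \ldots, m_L)$ in the parts of $\mu$. Squaring $s = \sum_i m_i$ and rearranging gives the standard identity $e_2(m_1, \ldots, m_L) = \binom{s}{2} - \sum_{m \in \mu} \binom{m}{2}$, which is precisely the exponent in the claimed generating function. Combining with the bijection yields the formula, and the concluding remark about finite sequences of positive integers of length $> 1$ follows by repackaging the double sum as a single sum over all compositions with $|\mu| \neq 1$, weighted by $e_2(\mu)$.

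The main bookkeeping hazard is the empty partition and the length-$1$ exclusion, where the proof could easily be misstated; I would emphasise that nonempty Gorenstein partitions correspond to compositions of length $\geq 2$, while the unique Gorenstein partition of $0$ is encoded by the empty composition. Beyond this, the argument is purely a translation between two ways of parametrising the same staircase-shaped object followed by the elementary identity relating $e_2$ and $\binom{s}{2}$.
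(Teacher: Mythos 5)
Your proposal is correct and follows essentially the same route as the paper: the same bijection between Gorenstein partitions with diagonal constant $s$ and compositions of $s$ with at least two parts, obtained from the block lengths between the indices where consecutive parts differ, with the same handling of the empty partition and the exclusion of one-part compositions. The only difference is cosmetic: you compute the size of the partition directly as $\sum_{j<k} m_j m_k = e_2(\mu)$ and then convert to $\binom{s}{2} - \sum_{m \in \mu}\binom{m}{2}$ by the standard identity, whereas the paper obtains the binomial form by comparing with the staircase partition $(s-1, s-2, \dots, 1)$ and only afterwards notes the $e_2$ interpretation.
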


\begin{proof}
For a Gorenstein parititon $\rho$ let $i$ be an index such that $\rho_i \neq \rho_{i+1}$, we then define $\rho_i + i$ as the \emph{diagonal constant} of $\rho$, as it is the same for all such $i$ by Lemma~\ref{gorenlemma}. We consider some fixed index $s$ in the generating function sum. The case $s = 0$ is obvious, so we can assume $s > 0$. Let $\rho$ be a Gorenstein partition with diagonal constant $s$. Let $i_1 < \dots < i_r$ be the indices of $\rho$ such that $\rho_{i_j} \neq \rho_{i_j+1}$. We then prepend $i_0 = 0$ and append $i_{r+1} = s$ to the sequence. Then $\sum_{j = 1}^{r+1} i_j - i_{j-1} = s$, so $\mu = i_1 - i_0, i_2 - i_1, \dots, i_{r+1} - i_r$ is a composition of $s$ with at least two terms. We also see that any such composition with at least two terms can be transformed back into a valid Gorenstein partition by reversing the steps.

Next we consider what the sum of $\rho$ is in terms of $s$ and $\mu$. We compare $\rho$ to the triangular partition $\sigma = s - 1, s - 2, \dots, 1$ which has sum $\binom{s}{2}$ and is Gorenstein. We see that $\sigma_1 + 1 = s$ so $\sigma_{i_j} + i_j = \rho_{i_j} + i_j$ for all $j$ since $\sigma_{\ell} \neq \sigma_{\ell + 1}$ for all $\ell$. If $\mu_j = x$ this means $\rho$ contains $x$ equal consecutive values ending in $\rho_{i_j} = \sigma_{i_j}$. Thus the sum of those $x$ values is $\binom{x}{2}$ lower than the sum of the corresponding values in $\sigma$. Taking the sum over all the values of $\mu$ we get that the sum of $\rho$ is $\binom{s}{2} - \sum_{m \in \mu} \binom{m}{2}$, which completes our proof for the generating function.

The relation to symmetric functions can be seen by expanding the exponent 
\[\binom{\sum_{m \in \mu} m}{2} - \sum_{m \in \mu} \binom{m}{2}\]
and seeing that the pure terms $m, m^2$ cancel and the mixed terms add to cancel the $2$ in the denominator of the binomial.
\end{proof}

The generating function $\sum_{s \geq 0} \sum_{\mu \vDash s, \abs{\mu} \neq 1} x^{\binom{s}{2} - \sum_{m \in \mu} \binom{m}{2}}$ is not useful for actually computing new terms in the sequence, but we can use the following recurrence instead. By ignoring all but the first and last $\sqrt{n}$ summands in the recurrence below, as they are zero, we can compute the $n$-th value in $\mathcal{O}(n^{2.5})$ time and $\mathcal{O}(n^2)$ space.

\begin{theorem}
The number of Gorenstein partitions with sum $n$, and thus also the number of elements in $|I_n(132, 213)|$, is given by the sum $\sum_{d = 0}^n f(n, d)$ where
\[f(n, d) = \begin{cases}
0 & \text{ if } n < 0 \\
1 & \text{ if } n = 0  \\
\sum_{k = 1}^d f(n - k(d + 1 - k), d - k) & \text{ otherwise }
\end{cases}\]
\end{theorem}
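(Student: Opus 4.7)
The plan is to interpret $f(n,d)$ as the number of Gorenstein partitions of $n$ with diagonal constant $d+1$, using the convention that the empty partition serves as a terminator and so contributes $1$ to $f(0,d)$ for every $d\geq 0$. I would then establish the recurrence by peeling off the first block of equal parts from a non-empty Gorenstein partition, and finally obtain the total count by stratifying over the diagonal constant.

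The base cases are immediate: $f(n,d)=0$ for $n<0$ is vacuous, and $f(0,d)=1$ records the empty partition. For the recursive step, let $\rho$ be a non-empty Gorenstein partition with diagonal constant $s=d+1$, and let $k=i_1$ be the position of its first corner, so $\rho_1=\cdots=\rho_k>\rho_{k+1}$. By Lemma~\ref{gorenlemma} we have $\rho_k+k=d+1$, hence the common value of the first block is $d+1-k$, and the requirement $\rho_1\geq 1$ forces $1\leq k\leq d$. Removing this leading block subtracts $k(d+1-k)$ from the sum; moreover, under the reindexing $j\mapsto j-k$, the Gorenstein equation $\rho_j+j=d+1$ becomes $\rho_j+(j-k)=d+1-k$, so the residual sequence is a (possibly empty) Gorenstein partition with diagonal constant $d+1-k$, i.e.\ parameter $d-k$. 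Summing over the admissible values $1\leq k\leq d$ yields the stated recurrence.

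For the overall count I would stratify Gorenstein partitions of $n$ by their diagonal constant. Every non-empty such partition has a unique $s\geq 2$ and so contributes to exactly one $f(n,d)$ with $d=s-1\geq 1$, while $f(n,d)$ vanishes for $d>n$ because the minimum sum for a non-empty Gorenstein partition with diagonal constant $d+1$ is $d$, attained by the composition $(d,1)$. Hence $\sum_{d=0}^{n}f(n,d)$ enumerates each Gorenstein partition of $n$ exactly once, and the previous theorem identifies this number with $|I_n(132,213)|$. The main subtlety I expect is reconciling the universal base case $f(0,d)=1$, which is required for the recursion to terminate cleanly whenever the partition is exhausted before $d$ reaches zero, with the desire to count the empty partition only once overall; this is handled by observing that the empty partition is only relevant when $n=0$, in which case the outer sum collapses to the single term $f(0,0)=1$.
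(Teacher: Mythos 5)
Your proposal is correct and follows essentially the same route as the paper: interpret $f(n,d)$ as counting Gorenstein partitions of $n$ with diagonal constant $d+1$, peel off the leading block of $k$ equal parts of value $d+1-k$ (using Lemma~\ref{gorenlemma} to pin down that value), and sum over the diagonal constant. Your added remarks on the range $1\leq k\leq d$, the vanishing of $f(n,d)$ for $d>n$, and the role of the empty partition in the base case only make explicit points the paper leaves implicit.
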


\begin{proof}
We claim that $f(n, d)$ equals the number of Gorenstein partitions with sum $n$ and diagonal constant $d + 1$, from which the result would follow.

The cases with $n \leq 0$ are trivial, so we focus on the last one. Say we have some Gorenstein partition $\rho_1, \dots, \rho_r$ with sum $n$ and diagonal constant $d + 1$. Then there is some maximal index $k$ such that $\rho_1 = \rho_k$ and $\rho_k \neq \rho_{k+1}$. Then we must have $\rho_k + k = d + 1$. Thus $\rho_1 = \rho_2 = \dots = \rho_k = d + 1 - k$, so the sum of these entries is $k(d + 1 - k)$. If we consider the remaining entries unto themselves, assuming there are any, they must form another Gorenstein partition with sum $n - k(d + 1 - k)$ and diagonal constant $d + 1 - k$, since all entries are shifted left by $k$. Summing over all such $k$ we get the formula above.
\end{proof}

\begin{theorem}
$|I_k(132, 231)|$ is equal to the number of partitions on $k$ elements into distinct parts, the corresponding sequence is entry \texttt{A000009} in the OEIS.
\end{theorem}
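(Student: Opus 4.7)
The plan is to refine the bijection from Theorem~\ref{partitions}, which assigns to each $\pi \in I_k(132)$ the partition $\rho$ obtained by deleting trailing zeros from the (weakly decreasing) inversion table $b_1 \geq b_2 \geq \dots \geq b_n$. I would show that $\pi$ additionally avoids $231$ if and only if $\rho$ has distinct parts, which immediately gives a bijection between $I_k(132, 231)$ and partitions of $k$ into distinct parts. Since indecomposability and the recovery of trailing zeros are already handled by Theorem~\ref{partitions}, no new bookkeeping is needed there.

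The technical engine is an elementary observation about consecutive entries of the inversion table. If $\pi_i > \pi_{i+1}$, then $b_i - b_{i+1} = 1 + |\{m > i+1 : \pi_{i+1} < \pi_m < \pi_i\}| \geq 1$, while if $\pi_i < \pi_{i+1}$, then $b_{i+1} - b_i = |\{m > i+1 : \pi_i < \pi_m < \pi_{i+1}\}| \geq 0$. Combined with the weakly decreasing hypothesis of Lemma~\ref{inv132}, this yields the dichotomy: $b_i > b_{i+1}$ forces $\pi_i > \pi_{i+1}$, and $b_i = b_{i+1}$ forces $\pi_i < \pi_{i+1}$.

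For the forward direction, suppose some non-zero part of $\rho$ is repeated, so $b_i = b_{i+1} > 0$. The observation gives $\pi_i < \pi_{i+1}$, and since $b_i > 0$ there is some $k \geq i+2$ with $\pi_k < \pi_i$, making $(i, i+1, k)$ an occurrence of $231$. For the reverse direction, if $\rho$ has distinct parts, then the non-zero block $b_1 > b_2 > \dots > b_r$ is strictly decreasing, followed by $b_{r+1} = \dots = b_n = 0$. The observation then forces the ``V-shape'' $\pi_1 > \pi_2 > \dots > \pi_{r+1} < \pi_{r+2} < \dots < \pi_n$, with the unique minimum at position $r+1$. A short case split on the leftmost index $i$ of a hypothetical $231$ pattern rules it out: any $\pi_j > \pi_i$ must lie in the increasing tail, and then every subsequent $\pi_k > \pi_j > \pi_i$ contradicts $\pi_k < \pi_i$.

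I expect the main obstacle to be packaging the $b_i$-versus-$\pi_i$ case analysis cleanly, particularly at the single transition $b_r > b_{r+1} = 0$ where the inversion-count identity still applies but with vanishing summands, and checking that the bijection with distinct-parts partitions matches up correctly with the trailing-zero convention already established in Theorem~\ref{partitions}.
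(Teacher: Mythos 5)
Your proposal is correct and follows essentially the same route as the paper: both restrict the inversion-table bijection of Theorem~\ref{partitions} to $I_k(132,231)$ and show that a repeated (non-zero) part of $\rho$ is equivalent to an occurrence of $231$ in $\pi$. The only divergence is in one direction, where you prove the contrapositive via the quantitative dichotomy on consecutive inversion-table entries and the resulting V-shape of $\pi$, while the paper argues directly from a $231$ occurrence (choosing the leftmost index maximal) that two entries of $\rho$ must coincide; both arguments are sound.
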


\begin{proof} 
Let $\pi$ be a permutation and $\rho$ be the partition corresponding to its inversion table as given by Theorem~\ref{partitions}. It suffices to show that an occurrence of $231$ in $\pi$ is equivalent to $\rho$ containing two equal values. Since $\rho$ is written in decreasing order this is equivalent to $\rho$ containing two equal adjacent values, say $\rho_j = \rho_{j+1}$. We cannot have $\pi_j > \pi_{j + 1}$ since then we would have $\rho_j > \rho_{j+1}$, so $\pi_j < \pi_{j+1}$. Since $\rho_{j+1} > 0$ we must have some $\ell$ such that $\pi_{j+1} > \pi_{\ell}$ and $j + 1 < \ell$. But then $j, j + 1, \ell$ is an occurrence of the pattern $231$.

For the other direction suppose we have a $132$-avoiding permutation $\pi = \pi_1 \dots \pi_n$ that has an occurrence of the pattern $231$ at the indices $t, u, v$. Without loss of generality take $t$ to be the maximal valid index $t$ for the given $u, v$. Suppose there is an index $t < i < u$ such that $\pi_t > \pi_i$. Since $t$ is maximal this means $\pi_i < \pi_v$, otherwise $i$ would be a valid choice for the index $t$. But then $i, u, v$ is an occurrence of the pattern $132$, which cannot be. Thus there is no such index $i$. Therefore any inversion $(t, j)$ gives rise to an inversion $(u, j)$. This means $\rho_u \leq \rho_t$. But since the inversion table must be weakly decreasing $\rho_u \geq \rho_t$, so $\rho_u = \rho_t$ which completes our proof.
\end{proof}

\begin{theorem}
$|I_k(132, 321)|$ is equal to the number of partitions on $k$ elements into equal values. This is in turn equal to the number of divisors of $k$. The corresponding sequence is entry \texttt{A000005} in the OEIS.
\end{theorem}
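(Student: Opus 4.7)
The plan is to refine the bijection from Theorem~\ref{partitions} between $I_k(132)$ and partitions of $k$: under the inversion-table map $\pi \mapsto \rho$, I claim that avoiding $321$ corresponds exactly to the partition $\rho$ having all its (non-zero) parts equal. Since a partition of $k$ with all equal parts is determined by its common part size, which must divide $k$, this immediately gives $|I_k(132,321)| = d(k)$, the number of divisors of $k$.

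The technical heart is the following observation about the inversion table $b_1, b_2, \ldots, b_n$ of a $132$-avoiding permutation: one has $\pi_i > \pi_{i+1}$ if and only if $b_i > b_{i+1}$. The implication $\pi_i > \pi_{i+1} \Rightarrow b_i > b_{i+1}$ is automatic in any permutation. For the converse, if $\pi_i < \pi_{i+1}$, then $132$-avoidance rules out the existence of any index $j > i+1$ with $\pi_i < \pi_j < \pi_{i+1}$, since $(i, i+1, j)$ would form a $132$; consequently the $j > i+1$ contributing to $b_i$ and $b_{i+1}$ coincide, forcing $b_i = b_{i+1}$.

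For the forward direction of the refinement, suppose $\rho$ contains two distinct positive parts. Since $\rho$ is weakly decreasing, there exists an index $i$ with $\rho_i > \rho_{i+1} > 0$ (take $i$ one less than the first position at which the smallest non-zero part appears). By the observation above, $\pi_i > \pi_{i+1}$; and since $b_{i+1} = \rho_{i+1} > 0$, there is some $j > i+1$ with $\pi_j < \pi_{i+1}$. Then $(i, i+1, j)$ is an occurrence of $321$. Conversely, if all positive parts of $\rho$ equal $m$ (with $r$ copies), a direct computation using the proof of Theorem~\ref{partitions} shows that the associated permutation is
\[\pi = (m+1)(m+2)\cdots(m+r)\,1\,2\,\cdots\,m,\]
the concatenation of two strictly increasing blocks. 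Any three-term subsequence uses at most two indices from either block, so at least two consecutive chosen values come from the same increasing block and are in ascending order, ruling out $321$.

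I expect the main obstacle to be the key observation about the inversion table, since it is the one place where $132$-avoidance must be invoked in a non-trivial way; once that is in hand, both directions of the refinement follow quickly, and the identification of partitions into equal parts with divisors of $k$ is immediate.
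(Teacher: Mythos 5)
Your proof is correct and follows essentially the same route as the paper: both refine the bijection of Theorem~\ref{partitions} and show that two distinct positive parts force an adjacent pair with $\rho_i>\rho_{i+1}>0$, hence $\pi_i>\pi_{i+1}$ with some later smaller value giving a $321$. The only (harmless) differences are in the converse, where you identify the permutation attached to the equal-part partition explicitly as $(m+1)\cdots(m+r)\,1\,2\cdots m$ and note it is a union of two increasing blocks, while the paper argues the contrapositive directly (a $321$ occurrence at $t<u<v$ forces $\rho_t>\rho_u>0$), and in your ``key observation,'' where the equivalence $\pi_i>\pi_{i+1}\iff b_i>b_{i+1}$ in fact holds for all permutations (anything smaller than $\pi_i$ is smaller than $\pi_{i+1}$ when $\pi_i<\pi_{i+1}$), so the appeal to $132$-avoidance there is not actually needed.
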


\begin{proof}
In a similar manner to the previous proof, all we have to show is the equivalence of the partition containing two different values and the permutation having an occurrence of the pattern $321$. Suppose first that our partition $\rho = (\rho_1, \dots, \rho_r)$ contains two unequal values and let $\pi = \pi_1 \dots \pi_n$ be the permutation that has $\rho$ has an inversion table after appending the minimum number of required zeros to make it subdiagonal. Since the partition is written in decreasing order we can without loss of generality assume these values are adjacent, say $\rho_j \neq \rho_{j+1}$. Since the partition is weakly decreasing we must then have $\rho_j > \rho_{j + 1}$. This means $\pi_j > \pi_{j+1}$. Since $\rho_{j + 1} > 0$ we must then have some $\ell$ such that $\pi_{j + 1} > \pi_{\ell}$ and $j + 1 < \ell$. But then $j, j + 1, \ell$ is an occurrence of the pattern $321$.

For the other direction we assume we have a $132$-avoiding permutation $\pi = \pi_1 \dots \pi_n$ that has an occurrence of the pattern $321$ at $t, u, v$. We let $\rho = (\rho_1, \dots, \rho_r)$ be the non-zero entries of the inversion table of $\pi$. Then any inversion $(u, w)$ gives us an inversion $(t, w)$ so $\rho_t \geq \rho_u$. Furthermore $\pi_t > \pi_u$ so $\rho_t > \rho_u$. Since $\pi_v < \pi_u$ we also have $\rho_u > 0$. Thus the partition contains two different positive values, completing our proof.
\end{proof}

\section{More than two patterns}

Most of the remaining pattern combinations are trivially deduced as some subset of the patterns forces the sequence to die out or contain only very specific permutations, as noted before. We consider here the complement of those cases.

\begin{theorem}
$|I_k(123, 132, 231)| = 1$.
\end{theorem}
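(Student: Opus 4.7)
The plan is to lean on the observation from Theorem~\ref{almosttriangular} that avoidance of both $123$ and $132$ forces each $\pi_i$ to have at most one value larger than it to its right, since otherwise two such values together with $\pi_i$ would form a $123$ or a $132$ pattern. My first step would be to sharpen this using the extra $231$-avoidance: if $\pi_i < \pi_j$ with $i < j$ and any $\ell > j$ exists, then uniqueness forces $\pi_\ell \le \pi_i$, hence $\pi_\ell < \pi_i$, making $(i,j,\ell)$ a $231$-occurrence. So the unique right-larger element for any $\pi_i$ must be located at position $n$.

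Writing $M = \pi_n$, this pins down the shape of $\pi$ completely. Positions $i < n$ with $\pi_i > M$ have no larger element to their right and therefore form a decreasing run; positions $i < n$ with $\pi_i < M$ have only $\pi_n$ larger to their right and also form a decreasing run; and a position of value $> M$ cannot precede a position with index $< n$ of value $< M$ without violating the uniqueness property. So $\pi$ has the form (decreasing values $> M$) followed by (decreasing values $< M$) followed by $M$. Since the values used are exactly $\{1,\dots,n\}\setminus\{M\}$, both runs are determined, and $\pi = n,\,n-1,\,\dots,\,M+1,\,M-1,\,\dots,\,1,\,M$.

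Finally, I would enforce indecomposability and count inversions. The prefix of length $n - 1$ contains every value except $M$, so indecomposability rules out precisely $M = n$. Every pair $(n, M)$ with $1 \le M \le n-1$ (plus the trivial $n = 1$ case) therefore yields a valid indecomposable permutation, and a direct calculation gives $\operatorname{inv}(\pi) = \binom{n}{2} - (M - 1)$. As $M$ decreases from $n-1$ to $1$ for each $n \ge 2$, these values sweep through $\binom{n-1}{2}+1, \dots, \binom{n}{2}$, so every non-negative integer $k$ is attained by exactly one permutation, proving $|I_k(123, 132, 231)| = 1$. The main obstacle is the opening implication --- cleanly arguing that the sole right-larger element must live at position $n$ --- since once that structural fact is in hand, the remaining steps are essentially bookkeeping.
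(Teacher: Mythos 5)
Your argument is correct in substance, but it takes a different route from the paper. The paper works entirely on the partition side: it intersects the bijection of Theorem~\ref{almosttriangular} (avoiding $\{123,132\}$ gives almost triangular partitions) with the result that avoiding $\{132,231\}$ gives partitions into distinct parts, and then shows there is exactly one almost triangular partition of $k$ with distinct parts (one must increment precisely the last $\ell$ entries of $0,1,\dots,r$). You instead argue directly on the permutation: from $\{123,132\}$-avoidance each entry has at most one larger entry to its right, and $231$-avoidance forces that larger entry to sit at position $n$, which pins the permutation down as $n,n-1,\dots,M+1,M-1,\dots,1,M$; indecomposability excludes $M=n$, and $\inv(\pi)=\binom{n}{2}-(M-1)$ sweeps each $k$ exactly once. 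Your route buys an explicit description of the unique permutation (analogous to the paper's explicit answer for $I_k(231,321)$), and is self-contained apart from the ``at most one right-larger element'' observation; the paper's route is shorter because it reuses the partition machinery already built, and it generalises more smoothly to the later theorems that intersect further pattern conditions on the partition side.

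Two small repairs are needed. First, the sentence ``a position of value $>M$ cannot precede a position with index $<n$ of value $<M$'' has the inequalities reversed: in the permutation you end up with, large values do precede small ones. What the uniqueness property actually forbids is a value $<M$ preceding a value $>M$ at an index $<n$, since that larger value would be a right-larger element not at position $n$; this corrected statement is exactly what yields the form you write down. Second, for the count to give $|I_k|=1$ rather than $\le 1$ you should also verify that each permutation $n,n-1,\dots,M+1,M-1,\dots,1,M$ with $1\le M\le n-1$ really avoids $123$, $132$ and $231$; this is immediate (the only ascents end at position $n$, so no length-$3$ pattern requiring an internal ascent can occur), but it should be said.
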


\begin{proof}
We know that $I_k(123, 132)$ gives us almost triangular partitions and that $I_k(132, 231)$ gives us partitions with distinct parts, so this gives us almost triangular partitions with distinct parts. The number of values in an almost triangular partition of $k$ is uniquely determined as noted in Theorem~\ref{almosttriangular}. Suppose we start with the partition $0, 1, 2, \dots, r$. Incrementing any subset of $\ell > 0$ values by one produces a duplicate, except if we increment exactly the last $\ell$ values. This is because if we do not choose the last $\ell$ values there will be some value that is increased by one, followed by a value that is unchanged, producing a duplicate value. Thus if $1 + 2 + \dots + r = k - \ell$ we have to increment the last $\ell$ values, producing a unique partition.
\end{proof}

\begin{theorem}\label{diagonalpascal}
$|I_k(123, 132, 213)|$ enumerates the Pascal triangle, read by diagonals, offset by two elements. In other words, it reads the binomials $\binom{n}{m}$ in increasing order by the sum $n + m$, with each set being read in increasing order by $n$, starting at $\binom{1}{1}$ for $k = 0$. Aside from $\binom{1}{1}$ it walks over all values with $n \geq 2$, including zeroes, see Figure~\ref{weirdwalk}. Furthermore its generating function can be written as
\[1 + x + \sum_{d \geq 3} x^{1+
\binom{d - 1}{2}} \sum_{n = 2}^d \binom{n}{d - n} x^{n - 2}\]
\end{theorem}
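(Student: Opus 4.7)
The plan is to combine the two previous partition identifications. By Theorem~\ref{almosttriangular}, the inversion-table bijection of Theorem~\ref{partitions} sends $I_k(132, 123)$ onto the almost triangular partitions of $k$; the preceding Gorenstein theorem sends $I_k(132, 213)$ onto the Gorenstein partitions of $k$. Because $I_k(123, 132, 213)$ is the intersection of these two pattern classes, it is enough to enumerate the partitions of $k$ that are simultaneously almost triangular and Gorenstein.

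Write the almost triangular partition in the increasing form $\mu_1 \leq \mu_2 \leq \ldots \leq \mu_r$ with $\mu_i \in \{i-1, i\}$ and $r$ determined by $k$. Call position $i$ type $A$ if $\mu_i = i - 1$ and type $B$ if $\mu_i = i$. Applying Lemma~\ref{gorenlemma} to the decreasing form $\lambda_i = \mu_{r+1-i}$, the value $\lambda_i + i$ always lies in $\{r, r+1\}$. If the Gorenstein diagonal constant equals $r$, then one checks that every $\mu_i$ must equal $i - 1$, but this is exactly the degenerate partition excluded from the definition of almost triangular; so the diagonal constant must be $r + 1$ in every valid case. A short verification then shows that the Gorenstein condition at diagonal constant $r + 1$ reduces to the clean combinatorial rule: the type sequence $T_1 T_2 \ldots T_r$ contains no two consecutive $A$'s.

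The number of length-$r$ sequences over $\{A, B\}$ with exactly $a$ pairwise non-adjacent $A$'s is the classical $\binom{r-a+1}{a}$ (place the $r-a$ $B$'s and pick $a$ of the $r-a+1$ surrounding gaps for the $A$'s). Each type-$A$ position reduces the value by one relative to the full staircase, so $\sum_i \mu_i = \binom{r+1}{2} - a$; hence $a = \binom{r+1}{2} - k$, and $r$ is the smallest integer with $\binom{r+1}{2} \geq k$. Setting $d := r + 1$ and $n := r - a + 1$ gives $d - n = a$, $|I_k(123, 132, 213)| = \binom{n}{d-n}$, and $k = \binom{d-1}{2} + n - 1$, which is precisely the diagonal walk through Pascal's triangle described in the statement. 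The generating function then assembles by summing $\binom{n}{d-n} x^{\binom{d-1}{2} + n - 1}$ over $d \geq 3$ and $2 \leq n \leq d$, with the degenerate cases $k \in \{0, 1\}$ accounting for the separate $1 + x$. The main subtlety lies in the reduction of the Gorenstein diagonal constant to $r + 1$ and its repackaging as the ``no two consecutive $A$'s'' rule; the counting step and the translation to $\binom{n}{d-n}$ are then immediate.
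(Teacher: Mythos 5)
Your proposal is correct and follows essentially the same route as the paper: intersect the two partition characterizations to get almost triangular Gorenstein partitions, reduce the Gorenstein condition (via Lemma~\ref{gorenlemma}) to the rule that no two adjacent positions may both take the smaller value, count such binary choices by a binomial coefficient, and reindex to obtain the diagonal walk and the generating function. The only cosmetic difference is that you count the admissible type sequences by the standard gap-placement argument $\binom{r-a+1}{a}$, whereas the paper extracts the same binomial coefficient from a run-length generating function.
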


\makeatletter
\newcommand\binomialCoefficient[2]{%
    \c@pgf@counta=#1
    \c@pgf@countb=#2
    %
    \c@pgf@countc=\c@pgf@counta%
    \advance\c@pgf@countc by-\c@pgf@countb%
    \ifnum\c@pgf@countb>\c@pgf@countc%
        \c@pgf@countb=\c@pgf@countc%
    \fi%
    %
    \c@pgf@countc=1
    \c@pgf@countd=0
    \pgfmathloop
        \ifnum\c@pgf@countd<\c@pgf@countb%
        \multiply\c@pgf@countc by\c@pgf@counta%
        \advance\c@pgf@counta by-1%
        \advance\c@pgf@countd by1%
        \divide\c@pgf@countc by\c@pgf@countd%
    \repeatpgfmathloop%
    \the\c@pgf@countc%
}
\makeatother

\begin{figure}
\centering
\begin{tikzpicture}
\def\dy{-0.75}
\def\dx{1}
\def\of{-0.5}
\foreach \r in {0,...,8} {
	\foreach \c in {0,...,\r} {
		\node (C-\r-\c) at ({\dx*\c+\of*\r},{\dy*\r}) {
			$\binomialCoefficient{\r}{\c}$
		};
		\ifthenelse{\c=0 \AND \r<2}{}{\node[circle, inner sep=0pt, minimum size=16pt] (H-\r-\c) at (C-\r-\c) {};};
	}
}

\foreach \r/\c in {2/3,2/4,2/5,2/6,3/4,3/5} {
	\node[text=gray] (C-\r-\c) at ({\dx*\c+\of*\r},{\dy*\r}) {
		$0$
	};
	\node[circle, inner sep=0pt, minimum size=16pt] (H-\r-\c) at (C-\r-\c) {};
}

\draw[->, very thick, dashed, red] (H-1-1) -- (H-2-0);
\draw[->, very thick, dashed, red] (H-2-1) -- (H-3-0);
\draw[->, very thick, dashed, red] (H-2-2) -- (H-3-1) -- (H-4-0);
\draw[->, very thick, dashed, red] (H-2-3) -- (H-3-2) -- (H-4-1) -- (H-5-0);
\draw[->, very thick, dashed, red] (H-2-4) -- (H-3-3) -- (H-4-2) -- (H-5-1) -- (H-6-0);
\draw[->, very thick, dashed, red] (H-2-5) -- (H-3-4) -- (H-4-3) -- (H-5-2) -- (H-6-1)  -- (H-7-0);
\draw[->, very thick, dashed, red] (H-2-6) -- (H-3-5) -- (H-4-4) -- (H-5-3) -- (H-6-2)  -- (H-7-1) -- (H-8-0);

\draw[->, thick, blue] (H-2-0) -- (H-2-1);
\draw[->, thick, blue] (H-3-0) to[out=0, in=180, looseness=0.5] (H-2-2);

\foreach \x/\y in {4/3,5/4,6/5,7/6} {
\draw[thick, blue] (H-\x-0) to[out=0, in=210, looseness=0.5] ($(H-\x-0)+(1,0.25)$);
\draw[thick, blue, opacity=0.2, dashed] ($(H-\x-0)+(1,0.25)$) -- ($(-1,-0.25)+(H-2-\y)$);
\draw[->, thick, blue] ($(-1,-0.25)+(H-2-\y)$) to[in=180, out=30, looseness=0.5] (H-2-\y);
}

\node[text=red, rotate=135] (D) at (3.5,-3.25) {\Huge $\dots$};
\end{tikzpicture}
\caption{Enumeration order of Pascal Triangle in Theorem~\ref{diagonalpascal}.}
\label{weirdwalk}
\end{figure}

\begin{proof}
We know that $I_k(123, 132)$ gives us almost triangular partitions and that $I_k(132, 213)$ gives us Gorenstein partitions, so the intersection gives us almost triangular Gorenstein partitions. We show that this is equivalent to considering all partitions given by the following construction. Starting with $r - 1, \dots, 1, 0$ we construct our partition by selecting some subset of the partition that contains at least one out of every two adjacent elements, and then increment every selected value by $1$.

This construction clearly gives us almost triangular partitions, so it suffices to show that failing to increment two adjacent values is equivalent to the partition not being Gorenstein. Assume then we have two indices $a, a + 1$ that do not get incremented. There is always at least one value that gets incremented, so we have some index $b$ that gets incremented. Now $\rho_b = r - b + 1$ and depending on whether or not $b + 1$ gets incremented we have either $\rho_{b+1} = r - b - 1$ or $\rho_{b+1} = r - b$. In either case we have $\rho_b \neq \rho_{b+1}$. Furthermore $\rho_a = r - a \neq r - a - 1 = \rho_{a+1}$. Since $\rho_a + a = r \neq r + 1 = \rho_b + b$ we get that $\rho$ is not Gorenstein by Lemma~\ref{gorenlemma}.

Now we assume $\rho$ is not Gorenstein and wish to show there are two adjacent indices that fail to get incremented. By Lemma~\ref{gorenlemma} we have indices $a < b$ such that $\rho_a \neq \rho_{a+1}$, $\rho_b \neq  \rho_{b+1}$ and $\rho_a + a \neq \rho_b + b$ . If we had incremented index $a + 1$ but not $a$ we would have $\rho_a = \rho_{a+1}$, so we get that if index $a$ was not incremented, index $a + 1$ was not either. But if neither of them was incremented, we already have two adjacent elements neither of which was incremented, which completes this case. Thus we can assume this is not the case, so index $a$ was incremented. The same applies to $b$. But then $\rho_a + a = \rho_b + b$, which is a contradiction, so we are done.

Lastly we show that this construction gives us the claimed enumeration. Let $r$ be the number of elements in the partition, which then also determines how many elements we have to increment, given by $s = k - r(r - 1)/2$. We can now view the construction as choosing the sizes of the consecutive runs of elements we increment with one non-incremented element between each run. The first and last run can be empty, but the others must have at least one element, so this is given by
\[[x^s]\p{\frac{1}{1 - x} \p{\frac{x}{1 - x}}^{r - s - 1} \frac{1}{1 - x}} = [x^{2s + 1 - r}]\frac{1}{(1 - x)^{r - s+ 1}} = \binom{s+1}{r-s+1}\]
We see now that for $r$ fixed, the sum of the arguments in the binomial coefficient is fixed. Furthermore as $s$ increases the upper argument increases as well, so this enumerates the binomial coefficients in the order claimed above. Furthermore for $k = 0$ we have $r = s = 0$, giving the coefficient $\binom{1}{1}$, so the offset is correct as well.

We take $k = 0, 1$ aside especially in the generating function with the $1+x$ at the start. We let $d$ be the sum of the arguments to the binomial, which is at least $3$ for $k \geq 1$. Then we need to sum over all such binomials which appear in the generating function. For $k \geq 2$ we increment some element in the procedure above so $s \geq 1$, so the upper argument is always at least $2$ and at most $d$. Thus the generating function for one fixed $d$ is the inner summand given in the generating function above. Finally we just have to count how many entries appear before a given value of $d$ starts, which comes out to $1+\binom{d - 1}{2}$, completing our proof.
\end{proof}

\begin{theorem}
$|I_k(132, 213, 231)|$ is equal to the number of odd divisors of $k$, the corresponding sequence is entry \texttt{A001227} in the OEIS.
\end{theorem}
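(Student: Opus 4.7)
The plan is to combine the two bijections from the previous theorems treating $I_k(132, 213)$ and $I_k(132, 231)$, both of which refine Theorem~\ref{partitions}. Under that theorem a permutation $\pi \in I_k(132)$ corresponds to the partition $\rho$ read off from its inversion table. The theorem for $I_k(132, 213)$ shows that adding $213$-avoidance is equivalent to $\rho$ being Gorenstein, and the theorem for $I_k(132, 231)$ shows that adding $231$-avoidance is equivalent to the parts of $\rho$ being all distinct. Combining them, $|I_k(132, 213, 231)|$ equals the number of Gorenstein partitions of $k$ whose parts are all distinct.

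Next I would identify these partitions explicitly. If $\rho = (\rho_1, \ldots, \rho_r)$ has distinct parts then $\rho_i \neq \rho_{i+1}$ for every $i < r$, so Lemma~\ref{gorenlemma} forces $\rho_i + i$ to be a single constant $c$ for every $i$. Hence $\rho_i = c - i$ and $\rho$ is the decreasing run $(c-1, c-2, \ldots, c-r)$ of consecutive positive integers. Conversely, any run $(b, b-1, \ldots, a)$ with $1 \leq a \leq b$ satisfies $\rho_i + i = b + 1$ at every index and has pairwise distinct parts, so it is Gorenstein with distinct parts. Thus the objects being counted are exactly the decompositions of $k$ as a sum of one or more consecutive positive integers.

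Finally I would invoke the classical identity that such decompositions are enumerated by the odd divisors of $k$. The one-line justification is that $k = a + (a+1) + \cdots + b$ is equivalent to $2k = (b - a + 1)(b + a)$, whose two factors have opposite parity since their difference is $2a - 1$; picking the odd factor therefore gives a bijection with the odd divisors of $k$. This yields OEIS entry \texttt{A001227}. The main obstacle is only the middle step, namely the observation that the Gorenstein condition combined with distinctness of parts collapses $\rho$ into a consecutive run; once that is in place, the problem reduces to a well-known counting identity and there is essentially nothing left to do.
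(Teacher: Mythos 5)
Your proposal is correct and follows essentially the same route as the paper: intersect the two refinements of Theorem~\ref{partitions} to get Gorenstein partitions of $k$ with distinct parts, use Lemma~\ref{gorenlemma} to see that these are exactly runs of consecutive positive integers summing to $k$, and count those by the odd divisors of $k$. The only cosmetic difference is in the final step, where the paper cites Mason's difference-of-triangular-numbers formulation while you verify the classical correspondence directly via $2k=(b-a+1)(a+b)$ with factors of opposite parity; both are valid.
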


\begin{proof}
The number of odd divisors of $k$ is the same as the number of ways to write $k$ as the difference of two triangular numbers, as noted by Mason~\cite{cons}. We know that $I_k(132,231)$ gives us partitions with distinct parts and $I_k(132,213)$ gives us Gorenstein partitions, so the intersection gives us Gorenstein partitions with distinct parts. 

Using our characterisation of Gorenstein partitions we note that since $\rho_i \neq \rho_{i+1}$ for all $i$, we must have that $\rho_i + i$ is constant across the entire partition. Thus if $\rho_i, \rho_{i+1} > 0$ we must have $\rho_i = \rho_{i+1} + 1$ since the partition is weakly decreasing. Therefore Gorenstein partitions with distinct parts are equivalent to writing $k$ as a sum of a set of consecutive run of integers. This is equivalent to writing $k$ as the difference of two triangular numbers by considering the maximum included integer in the sum and minimum excluded integer in the sum. 
\end{proof}

\begin{theorem}
$|I_k(132, 213, 321)| = |I_k(132, 321)|$.
\end{theorem}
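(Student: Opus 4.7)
The plan is to prove the stronger containment $I_k(132, 321) \subseteq I_k(213)$; equality of cardinalities then follows immediately, since $I_k(132, 213, 321) \subseteq I_k(132, 321)$ is automatic.

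First I would invoke the structural description established in the proof that matched $|I_k(132, 321)|$ with the divisors of $k$: a permutation $\pi \in I_k(132, 321)$ has an inversion table which, after appending the minimum number of trailing zeroes, consists of $m$ copies of a single positive value $a$ followed by zeroes, where $am = k$. Combining this with the indecomposability argument from Theorem~\ref{partitions} forces the length of $\pi$ to satisfy $n = a + m$.

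Next I would reconstruct $\pi$ explicitly from its inversion table using the standard rule "at position $i$, pick $\pi_i$ to be the $(b_i+1)$-st smallest among the values not yet used", where $b$ is the inversion table. For $1 \leq i \leq m$ this selects $\pi_i = a + i$, and for $m < i \leq n$ it selects $\pi_i = i - m$. Hence $\pi$ has the explicit form $(a+1)(a+2)\cdots(a+m)\,1\,2\,\cdots\,a$.

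Finally I would verify $213$-avoidance by a short case analysis on how three indices $i < j < k$ distribute between the blocks $\{1,\dots,m\}$ and $\{m+1,\dots,n\}$. If all three lie in the same block, then $\pi_i < \pi_j < \pi_k$, ruling out $213$. If $i \leq m < j$, then $\pi_k \leq a < \pi_i$, so the condition $\pi_i < \pi_k$ required by $213$ fails. If $j \leq m < k$, then $\pi_i < \pi_j$ (both in the first block), so the condition $\pi_j < \pi_i$ required by $213$ fails. No occurrence of $213$ is possible, which completes the argument. The only step requiring any care is extracting the explicit form of $\pi$ from the inversion table; once that is done the remainder is mechanical, and no new combinatorial idea is needed beyond those already used in the proof of $|I_k(132, 321)|$.
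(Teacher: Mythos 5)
Your argument is correct, but it takes a genuinely different route from the paper. The paper proves the same containment by a purely local argument: given an occurrence of $213$ at indices $a<b<c$ in an indecomposable permutation avoiding $132$ and $321$, it produces an index $k>c$ with $\pi_k<\pi_c$, and then either $\pi_k>\pi_b$, so that $b,c,k$ is an occurrence of $132$, or $\pi_k<\pi_b$, which yields a forbidden decreasing triple (the paper's text says $231$ at this point, but the intended occurrence is $321$ at $a,b,k$, since $\pi_a>\pi_b>\pi_k$); so no structural information about the class is needed beyond indecomposability and the two forbidden patterns. You instead pull in the classification proved just before in the paper: a permutation in $I_k(132,321)$ has inversion table consisting of $m$ copies of a positive value $a$ with $am=k$, indecomposability pins the length at $n=a+m$ exactly as in the injectivity/surjectivity argument of Theorem~\ref{partitions}, the standard reconstruction rule then gives the explicit form $(a+1)(a+2)\cdots(a+m)\,1\,2\cdots a$, and your three-case check on how $i<j<k$ straddle the two increasing blocks is exhaustive and correctly rules out $213$ (the first block's values all exceed the second block's, so either $\pi_i<\pi_k$ fails or $\pi_j<\pi_i$ fails). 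Since the divisor theorem precedes this one, invoking it is legitimate, and your route buys an explicit description of every member of $I_k(132,321)$ — more information than the bare equality — at the cost of being less self-contained; the paper's argument is shorter and works without knowing what the permutations look like. The only loose end is the trivial case $k=0$ (empty partition, $\pi=1$), which your parametrisation with $a,m\geq 1$ technically omits but which is immediate.
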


\begin{proof}
Inclusion in one direction is obvious, so we simply show that an occurrence of $213$ in $\pi$ implies an occurrence of either $132$ or $231$. Let $a, b, c$ be the indices at which $213$ occurs in $\pi$. If $\pi_k > \pi_c$ for all $k > c$ then $\pi$ would be decomposable, so there exists some $k > c$ such that $\pi_k < \pi_c$. Now if $\pi_k > \pi_b$ then $b, c, k$ would be an occurrence of $132$, in which case we are done. Thus we can assume $\pi_k < \pi_b$. But then $b, c, k$ is an occurrence of $231$, which completes our proof.
\end{proof}

\begin{theorem}
$|I_k(123, 132, 213, 231)|$ enumerates the Pascal triangle, except all values $> 1$ are replaced by $0$. The corresponding sequence is entry \texttt{A103451} in the OEIS and has the generating function 
\[\sum_{i\geq 0} x^{i(i+1)/2} + x^{(i+1)(i+4)/2}\]
\end{theorem}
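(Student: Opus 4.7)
The plan is to combine the characterizations of $I_k(132, 213, 231)$ and $I_k(123, 132)$ already established. The former corresponds bijectively to Gorenstein partitions with distinct parts, which are precisely the consecutive integer sequences $(m, m+1, \ldots, m+t-1)$ for $m, t \geq 1$, together with the empty partition for $k = 0$. By Theorem~\ref{almosttriangular}, avoiding $123$ in addition forces the corresponding partition to be almost triangular.

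The key step is to determine which of these consecutive integer partitions are also almost triangular. Writing the parts in increasing order and allowing an optional leading zero, almost triangularity requires the part at position $i$ to lie in $\{i-1, i\}$. For the family $(m, m+1, \ldots, m+t-1)$ this is satisfied precisely when $m \in \{1, 2\}$, with the case $m = 2$ requiring the optional leading zero. For $m = 1$ and $t \geq 1$ the partition sum is $t(t+1)/2$, and for $m = 2$ and $t \geq 1$ the sum is $t(t+3)/2$. Including the empty partition for $k=0$ and reindexing with $i = t$ in the first family and $i = t - 1$ in the second, we obtain the generating function $\sum_{i \geq 0} x^{i(i+1)/2} + x^{(i+1)(i+4)/2}$.

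To confirm this matches \texttt{A103451}, I would observe that in the Pascal triangle with values $>1$ replaced by $0$, the surviving entries in row $n \geq 1$ are exactly $\binom{n}{0} = \binom{n}{n} = 1$, while row $0$ contributes the single entry $1$. Reading row by row, the leftmost $1$ of row $n$ lands at cumulative position $\binom{n+1}{2} = n(n+1)/2$, and for $n \geq 1$ the rightmost $1$ lands at position $\binom{n+1}{2} + n = n(n+3)/2$. These are exactly the exponents $i(i+1)/2$ and $(i+1)(i+4)/2$ after the respective shifts $i = n$ and $i = n - 1$.

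The main subtlety will be reconciling two indexing conventions: the inversion table from Theorem~\ref{partitions} is weakly decreasing, while the almost triangular definition uses weakly increasing order with an optional leading zero. Carefully tracking this optional leading zero (equivalently, the allowable trailing zeros in the inversion table under the indecomposability constraint $n = m + t$) is exactly what produces the $m = 2$ family; without it only the triangular-number exponents would appear in the generating function.
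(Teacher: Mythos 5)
Your proposal is correct and follows essentially the same route as the paper: both intersect the previously established partition characterizations and reduce to identifying the consecutive runs starting at $1$ or $2$, whose sums are the triangular numbers and the triangular numbers minus one, matching the positions of the $1$'s in the zeroed-out Pascal triangle. The only cosmetic difference is the direction of the intersection (the paper starts from the unique almost triangular distinct-part partition of $I_k(123,132,231)$ and asks when it is a consecutive run, while you start from the consecutive runs of $I_k(132,213,231)$ and ask when they are almost triangular), which amounts to the same computation.
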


\begin{proof}
We already know the unique partition $I_k(123, 132, 231)$ gives us for a given $k$. Thus we only have to check for what $k$ this is in $I_k(132, 213, 231)$, i.e. when it is a run of consecutive integers. The smallest non-zero value in an almost triangular partition is always $1$ or $2$, so this is a consecutive run of integers starting with $1$ or $2$. That always gets us a sum of the form $n(n+1)/2$ or $n(n+1)/2 - 1$. This is exactly the indices of ones in the Pascal triangle when read row by row, completing our proof. The generating function follows directly as the first summand enumerates the indices of the leftmost column in the Pascal triangle, including zero, and the other the rightmost column, excluding zero.
\end{proof}

\acknowledgements{

The algorithm to generate permutations with a fixed number of inversions by Effler and Ruskey~\cite{invalg} was used to generate elements of all the sequences above, which helped tremendously in finding the formulas and other results in this paper.

Thanks to Anders Claesson, Henning Úlfarsson, Christian Bean and Jay Pantone both for bringing this topic to my attention and for enlightening discussions on it.

}

\bibliographystyle{acm}

\bibliography{enumeratedbyinversions}

\end{document}